\renewcommand{\baselinestretch}{1.1}
\renewcommand{\thefootnote}{\fnsymbol{footnote}}	
\newcommand{\arXiv}[1]{arXiv:\,\href{http://arxiv.org/abs/#1}{#1}}
\newcommand{\msn}[1]{MR:\,\href{http://www.ams.org/mathscinet-getitem?mr=MR#1}{#1}}
\newcommand{\MSN}[2]{MR:\,\href{http://www.ams.org/mathscinet-getitem?mr=MR#1}{#1}}
\newcommand{\doi}[1]{doi:\,\href{http://dx.doi.org/#1}{#1}}
\newcommand{\coeff}{\left( 1+ \frac{1}{\Delta^{1/3}-1}+\frac{1}{\Delta^{1/3}} \right) }
\newcommand{\Dp}{D'}
\newcommand{\Dpp}{D''}
\newcommand{\SEQ}{\textsc{Seq}}
\newcommand{\DEF}[1]{\emph{#1}}
\newcommand{\N}{\mathbb{N}} 
\newcommand{\eps}{\varepsilon}
\newcommand{\mc}{\mathcal} 
\newcommand{\W}{\mc{W}} 
\newcommand{\RR}{\mc{R}} 
\newcommand{\DD}{\mc{D}} 
\newcommand{\F}{\mc{F}} 
\newcommand{\A}{\mc{A}} 
\newcommand{\T}{\mc{T}} 
\renewcommand{\S}{\Lambda}
\newcommand{\maxclique}{\omega}
\newcommand{\width}{\theta}
\theoremstyle{plain}
\newtheorem{theorem}{Theorem}
\newtheorem{lemma}[theorem]{Lemma}
\newtheorem{corollary}[theorem]{Corollary}
\newtheorem{claim}[theorem]{Claim}
\theoremstyle{definition}
\DeclareMathOperator{\tr}{tr}
\newcommand{\tc}{\pi_{\textup{ch}}}
\newcommand{\st}{\chi_{\textup{st}}}
\newcommand{\ceil}[1]{\lceil{#1}\rceil}
\newcommand{\floor}[1]{\lfloor{#1}\rfloor}
\newcommand{\half}{\ensuremath{\protect\tfrac{1}{2}}}
\begin{document}

\title{\bf Nonrepetitive Colouring via Entropy Compression}

\author{
Vida Dujmovi{\'c}\,\footnotemark[4] \qquad 
Gwena\"el Joret\,\footnotemark[2] \qquad 
Jakub Kozik \footnotemark[5] \qquad 
David~R.~Wood\,\footnotemark[3]}

\date{}

\maketitle

\begin{abstract}
A vertex colouring of a graph is \emph{nonrepetitive} if there is no path whose first half receives the same sequence of colours as the second half. A graph is nonrepetitively $\ell$-choosable if given lists of at least $\ell$ colours at each vertex, there is a nonrepetitive colouring such that each vertex is coloured from its own list. It is known that, for some constant $c$, every graph with maximum degree $\Delta$ is $c\Delta^2$-choosable. We prove this result with $c=1$ (ignoring lower order terms). We then prove that every subdivision of a graph with sufficiently many division vertices per edge is nonrepetitively 5-choosable. The proofs of both these results are based on the Moser-Tardos entropy-compression method, and a recent extension by Grytczuk, Kozik and Micek for the nonrepetitive choosability of paths. Finally we prove that graphs with pathwidth $\width$ are nonrepetitively $\mathcal{O}(\width^{2})$-colourable.
\end{abstract}

\footnotetext{{\it 2010 Mathematics Subject Classification.} 05C15, 05D40.}

\footnotetext[4]{School of Mathematics and Statistics \& Department of Systems and Computer Engineering, Carleton University, Ottawa, Canada (\texttt{vdujmovic@math.carleton.ca}). Supported by NSERC and an Endeavour Fellowship from the Australian Government.}

\footnotetext[2]{Department of Mathematics and Statistics, The University of Melbourne, and School of Mathematical Sciences, Monash University, Melbourne, Australia. (\texttt{gwenael.joret@unimelb.edu.au}). 
Supported by the Belgian Fund for Scientific Research (F.R.S.--FNRS), by an Endeavour Fellowship from the Australian Government, and by a DECRA Fellowship from the Australian Research Council.}

\footnotetext[5]{Theoretical Computer Science Department, Faculty of Mathematics and Computer Science, Jagiellonian University, ul. Prof. St. Lojasiewicza 6, 30-348 Krakow, Poland (\texttt{jkozik@tcs.uj.edu.pl}).  Supported by the Polish National Science Center (DEC-2011/01/D/ST1/04412).}

\footnotetext[3]{School of Mathematical Sciences, Monash University, Melbourne, Australia  (\texttt{david.wood@monash.edu}). Research supported by  the Australian Research Council.}

\section{Introduction}

\renewcommand{\thefootnote}{\arabic{footnote}}

A colouring of a graph\footnote{We consider simple, finite, undirected
  graphs $G$ with vertex set $V(G)$, edge set $E(G)$, maximum degree
  $\Delta(G)$, and maximum clique size $\maxclique(G)$. The
  \DEF{neighbourhood} of a vertex $v\in V(G)$ is $N_G(v):=\{w\in
  V(G):vw\in E(G)\}$. The \DEF{neighbourhood} of a set $S\subseteq
  V(G)$ is $N_G(S):=\bigcup\{N_G(x):x\in S\}\setminus S$. The degree
  of a vertex $v\in V(G)$ is $\deg_G(v):=|N_G(v)|$. We use $N(v)$ and
  $N(S)$ and $\deg(v)$ if the graph $G$ is clear from the context.} is
\DEF{nonrepetitive} if there is no path $P$ such that the first half
of $P$ receives the same sequence of colours as the second half of
$P$. More precisely, a $k$-\DEF{colouring} of a graph $G$ is a
function $\psi$ that assigns one of $k$ colours to each vertex of $G$.
A path is \DEF{even} if its order is even.  An even path
$v_1,v_2,\dots,v_{2t}$ of $G$ is \DEF{repetitively} coloured by $\psi$
if $\psi(v_i)=\psi(v_{t+i})$ for all $i\in[1,t]:=\{1,2,\dots,t\}$. A
colouring $\psi$ is \DEF{nonrepetitive} if no path of $G$ is
repetitively coloured by $\psi$. The \DEF{nonrepetitive chromatic
  number} $\pi(G)$ is the minimum integer $k$ such that $G$ has a
nonrepetitive $k$-colouring.

Observe that every nonrepetitive colouring is \DEF{proper}, in the
sense that adjacent vertices receive distinct colours. Moreover, a
nonrepetitive colouring has no $2$-coloured $P_4$ (a path on four
vertices). A proper colouring with no $2$-coloured $P_4$ is called a
\DEF{star colouring} since each bichromatic subgraph is a star forest;
see \citep{Albertson-EJC04, Grunbaum73,
  FRR-JGT04,Borodin-DM79,NesOdM-03,Wood-DMTCS05}. The \DEF{star
  chromatic number} $\st(G)$ is the minimum number of colours in a
star colouring of $G$. Thus
\begin{equation*}
  \chi(G)\leq\st(G)\leq\pi(G)\enspace.
\end{equation*}

The seminal result in this field is by \citet{Thue06}, who in 1906 proved\footnote{A nonrepetitive $3$-colouring of $P_n$  is obtained as follows. Given a nonrepetitive sequence over $\{1,2,3\}$, replace each $1$ by the sequence $12312$,   replace each $2$ by the sequence $131232$, and replace each $3$ by   the sequence $1323132$. It follows that the new sequence is nonrepetitive. Thus arbitrarily long paths can be nonrepetitively $3$-coloured.} that every path is nonrepetitively 3-colourable. Nonrepetitive colourings have recently been widely studied \citep{Currie-EJC02,HJSS,PZ09,BreakingRhythm,BV-NonRepVertex07,
  BaratWood-EJC08,BK-AC04,GKM11,Grytczuk-EJC02,CG-ENDM07,BGKNP-NonRepTree-DM07, Grytczuk,Currie-TCS05,Manin,NOW,HJ-DM11,KP-DM08,FOOZ,Pegden11,BV-NonRepEdge08, AGHR-RSA02,GPZ,JS09,DS09,Kolipaka,BC13,Keszegh,KozikMicek,Grytczuk,Grytczuk-DM08,Gryczuk-IJMMS07,CSZ}; see the surveys \citep{Grytczuk,Grytczuk-DM08,Gryczuk-IJMMS07,CSZ}.

The contributions of this paper concern three different
generalisations of the result of Thue: bounded degree graphs, graph
subdivisions, and graphs of bounded pathwidth.

\subsection{Bounded Degree}

In a sweeping generalisation of Thue's result, \citet{AGHR-RSA02}
proved\footnote{The theorem of \citet{AGHR-RSA02} was actually for
  edge colourings, but it is easily seen that the method works in the
  more general setting of vertex colourings.} that for some constant
$c$ and for every graph $G$ with maximum degree $\Delta\geq 1$,
\begin{equation}
  \label{BoundedDegree}
  \pi(G)\leq c\,\Delta^2\enspace.
\end{equation}
Moreover, the bound in \eqref{BoundedDegree} is almost
tight---\citet{AGHR-RSA02} proved that there are graphs with maximum
degree $\Delta$ that are not nonrepetitively $(c\Delta^2/\log\Delta)$-colourable for some constant $c$.

The bound in \eqref{BoundedDegree}, in fact, holds in the stronger
setting of nonrepetitive list colourings. A \DEF{list assignment} of a
graph $G$ is a function $L$ that assigns a set $L(v)$ of colours to
each vertex $v\in V(G)$. Then $G$ is \DEF{nonrepetitively
  $L$-colourable} if there is a nonrepetitive colouring of $G$, such
that each vertex $v\in V(G)$ is assigned a colour in $L(v)$. And $G$
is \DEF{nonrepetitively $\ell$-choosable} if for every list assignment
$L$ of $G$ such that $|L(v)|\geq \ell$ for each vertex $v\in V(G)$, there
is a nonrepetitive $L$-colouring of $G$. The \DEF{nonrepetitive choice
  number} $\tc(G)$ is the minimum integer $\ell$ such that $G$ is
nonrepetitively $\ell$-choosable. Clearly, $\pi(G)\leq\tc(G)$.

All the known proofs of \eqref{BoundedDegree} are based on the
Lov{\'a}sz Local Lemma, and thus are easily seen to prove the stronger
result that
\begin{equation}
  \label{BoundedDegreeChoose}
  \tc(G)\leq c\,\Delta(G)^2\enspace.
\end{equation}
\citet{AGHR-RSA02} originally proved \eqref{BoundedDegreeChoose} with
$c=2 e^{16}$, which was improved to $36$ by \citet{Grytczuk} and then
to $16$ again by \citet{Gryczuk-IJMMS07}. Prior to the present paper, the best bound was $\tc(G)\leq 12.92(\Delta(G)-1)^2$ by
\citet{HJ-DM11} (assuming $\Delta(G)\geq2$). We improve the constant $c$ to $1$. 

\begin{theorem}
  \label{thm:One} For every graph $G$ with maximum degree $\Delta$,
$$\tc(G)\leq (1+o(1))\Delta^2\enspace.$$
\end{theorem}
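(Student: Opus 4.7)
The strategy is to apply the Moser--Tardos entropy compression technique in the refined form introduced by Grytczuk, Kozik, and Micek~\cite{GKM11} for nonrepetitive colouring of paths. Fix $\varepsilon>0$, set $k=\lceil(1+\varepsilon)\Delta^2\rceil$, and consider any list assignment $L$ with $|L(v)|\ge k$ for all $v\in V(G)$. After fixing an arbitrary ordering $v_1,\dots,v_n$ of $V(G)$, I would analyse the following randomised algorithm: at each step, assign a uniformly random colour from its list to the least-indexed uncoloured vertex $v$; if the new colouring creates a repetitively coloured path ending at $v$, choose a canonical such path (say, of largest length, with ties broken deterministically), uncolour its second half, record the bad-path choice in a log, and iterate. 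On termination every vertex carries an $L$-colour and no repetitively coloured path remains.

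Regarding the random colours as drawn from an infinite tape of independent uniforms, fix $N$ and consider the event that the algorithm runs for at least $N$ steps. The log is a sequence of $N$ entries, each either a success marker or a bad-event entry consisting of the half-length $t\ge 2$ and an embedding of the $2t$-vertex bad path as a walk of length $2t-1$ rooted at the target vertex of that step. The central observation is a \emph{reconstruction lemma}: the log together with the partial colouring after step $N$ uniquely determines the first $N$ values on the random tape. Indeed, one replays the algorithm in reverse: the target vertex at each step is forced by the preceding algorithm state, and on a bad event the colour assigned to the target is recovered from the colour of its pair in the first half of the bad path (which survives in the current partial colouring). Since there are at most $\Delta^{2t-1}$ walks of length $2t-1$ from a fixed vertex and at most $(k+1)^n$ partial colourings, the reconstruction lemma yields
\begin{equation*}
\Pr[\text{run length}\ge N] \;\le\; (k+1)^n\cdot \bigl(\text{number of logs of length }N\bigr)\cdot k^{-N}.
\end{equation*}
A generating-function argument, using an Elias-type prefix code for $t$ together with the bookkeeping identity $\sum_j t_j=N-n$ (net colourings equal $n$), shows that the right-hand side tends to $0$ once $k>(1+\varepsilon)\Delta^2$; hence the algorithm terminates with positive probability, producing a nonrepetitive $L$-colouring.

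The main obstacle is sharpness of the leading constant. The critical quantity is the radius of convergence of the weighted series $\sum_{t\ge2}\Delta^{2t-1}z^t$, which equals $\Delta^{-2}$, so the entropy-compression threshold is exactly $\Delta^2$. Squeezing the $(1+o(1))$ overhead out then amounts to a careful Motzkin-style count of the interleavings of successes and bad events, and choosing a prefix code for $t$ that wastes only $o(\log k)$ bits per bad event. These refinements are technical but not conceptual: the walk bound $\Delta^{2t-1}$ can even be tightened by noting that the last edge of the walk is constrained to leave the (determined) target vertex, shaving a further factor and simplifying the accounting. No new ingredient beyond~\cite{GKM11} is required, only the replacement of intervals on a path by rooted walks in $G$.
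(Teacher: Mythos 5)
Your proposal is the same overall strategy as the paper's: the Grytczuk--Kozik--Micek variant of Moser--Tardos, a reconstruction (lossless) lemma identifying input tapes with (colouring, record) pairs, an encoding of each bad path, and a count of possible records. Two points deserve attention.

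First, a local gap in the encoding. You encode the bad path as ``a walk of length $2t-1$ rooted at the target vertex,'' which tacitly assumes the newly coloured vertex $v$ is an endpoint of the repetitively coloured path $P$. That is true in GKM's path setting (vertices are coloured left to right, so a new bad path must end at the current vertex), but it fails in a general graph: after a rollback, the next vertex to be coloured need not be at the end of anything, and the new bad path $P$ only needs to \emph{contain} $v$, possibly in its interior. The paper's encoding $s(P,X,v)$ handles this by walking from $v$ out to one endpoint, writing a sentinel $-1$, and then walking from $v$ to the other endpoint; this also explains the extra factor of $k$ (the position of the sentinel) in the bound $\alpha_k < k\Delta^{2k-1}$. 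The fix costs only a polynomial factor, but as written your reconstruction step does not go through.

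Second, and more seriously, the passage from ``radius of convergence $\Delta^{-2}$'' to the sharp constant $(1+o(1))$ is exactly where the paper's work lies, and your sketch doesn't establish it. If you pay one marker bit per step (success vs.\ bad) plus an Elias code for each $t_i$ plus $\approx 2t_i\log\Delta$ bits for each walk, a straightforward accounting gives a record count on the order of $2^{N}\Delta^{2N}$, i.e.\ a threshold near $2\Delta^2$, not $(1+o(1))\Delta^2$. The marker bits are \emph{not} automatically negligible; one has to exploit that bad events are rare (most steps are successes) and that each bad event carries a $\Delta^{-1}$ ``discount'' coming from the index $p$ of the sentinel. The paper does this by associating to each record a Dyck word, putting the weight $w(d)=k_1\cdots k_q\Delta^{-q}$ on Dyck words, and then summing $w$ via a bijection to words over $\{0,1,2\}$ satisfying a Lagrange-type functional equation $F=z+zy\,F/(1-F)^2$. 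The asymptotics of $[z^t]B(z)$ with $B(z)=F(z,\Delta^{-1})$ are then read off from the smooth inverse-function schema (Flajolet--Sedgewick Theorem VII.2), and choosing $\tau'=1-\Delta^{-1/3}$ gives the explicit $\big(1+\tfrac{1}{\Delta^{1/3}-1}+\tfrac{1}{\Delta^{1/3}}\big)$ correction factor. Your plan names the right ingredients (``generating-function argument,'' ``Motzkin-style count'') but treats them as routine; in fact this is the step that distinguishes the constant $1$ from the earlier constants $2e^{16}$, $36$, $16$, $12.92$, and it requires an argument at the level of the weighted Dyck-word enumeration, not merely a prefix code and a bookkeeping identity.
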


The proof of Theorem~\ref{thm:One} is based on the celebrated
entropy-compression method of \citet{MoserTardos}, and more precisely
on an extension by \citet{GKM11} for nonrepetitive sequences (or
equivalently, nonrepetitive colourings of paths).  The latter authors
considered the following variant of the Moser-Tardos algorithm for
nonrepetitively colouring paths. Start at the first vertex of the path
and repeat the following step until a valid colouring is produced:
Randomly colour the current vertex. If doing so creates a repetitively
coloured subpath $P$, then uncolour the second half of $P$ and let the
new current vertex be the first uncoloured vertex on the path.
Otherwise, go to the next vertex in the path.  \citet{GKM11} used this
algorithm to obtain a short proof that paths are nonrepetitively
$4$-choosable, which was first proved by \citet{GPZ} using the
Lov\'asz Local Lemma. (It is open whether every path is
nonrepetitively $3$-choosable.) Our proof of Theorem~\ref{thm:One}
generalises this method for graphs of bounded degree.  
While the main conclusion of the Moser-Tardos method
was a constructive proof of the Lov{\'a}sz Local Lemma, as
\citet{KS-STOC} write, ``variants of the Moser-Tardos algorithm can be
useful in existence proofs''. Our result is further evidence of this
claim.

Note that the methods developed in the proof of Theorem~\ref{thm:One} have subsequently 
been applied to other graph colouring problems  \cite{Przybyo,EsperetParreau,Przybyo13} and also 
in pattern avoidance \cite{OchemPinlou}.

\subsection{Subdivisions}

A \DEF{subdivision} of a graph $G$ is a graph obtained from $G$ by
replacing the edges of $G$ with internally disjoint paths, where the
path replacing $vw$ has endpoints $v$ and $w$. In a beautiful
generalisation of Thue's theorem, \citet{PZ09} proved that every graph
has a nonrepetitively 3-colourable subdivision (improving on analogous
5- and 4-colour results by \citet{Gryczuk-IJMMS07} and
\citet{BaratWood-EJC08} respectively). For each of these theorems, the
number of division vertices per edge is $\mathcal{O}(n)$ or
$\mathcal{O}(n^2)$ for $n$-vertex graphs. Improving these bounds,
\citet{NOW} proved that every graph has a nonrepetitively
17-colourable subdivision with $\mathcal{O}(\log n)$ division vertices
per edge, and that $\Omega(\log n)$ division vertices are needed on
some edge of a nonrepetitively $\mathcal{O}(1)$-colourable subdivision
of $K_n$.  Here we prove that every graph has a nonrepetitively
$\mathcal{O}(1)$-\DEF{choosable} subdivision, which solves an open
problem by \citet{GPZ}. All logarithms are binary.

\begin{theorem}
  \label{thm:Subdivision}
  Let $G$ be a subdivision of a graph $H$, such that each edge $vw\in
  E(H)$ is subdivided at least
  $\ceil{10^{5}\log(\deg(v)+1)}+\ceil{10^{5}\log(\deg(w)+1)}+2$ times in
  $G$.  Then $$\tc(G)\leq 5\enspace.$$
\end{theorem}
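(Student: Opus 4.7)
The plan is to apply the entropy-compression framework used in Theorem~\ref{thm:One}, which extends the \citet{GKM11} algorithm from paths to bounded-degree graphs, in the setting of subdivisions. Fix a natural processing order of $V(G)$ (for instance, processing the subdivision paths of $G$ one by one along a DFS traversal of $H$). At each step, the algorithm colours the current vertex with a uniform random colour from its size-$5$ list. Whenever this creates a repetitively coloured even path $P=(p_1,\dots,p_{2t})$ ending at the current vertex, the algorithm uncolours the second half $p_{t+1},\dots,p_{2t}$ and restarts from the first uncoloured vertex, exactly as in Theorem~\ref{thm:One}. We aim to show that this randomised process halts with positive probability, which yields the desired nonrepetitive list-colouring.

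The new feature of the subdivision setting is that a repetitive path in $G$ can traverse several branch vertices (original vertices of $H$). Let the branch vertices visited by such a path $P$ be $u_1,\dots,u_k$ in order. Between consecutive branch vertices $u_i,u_{i+1}$ the path $P$ is forced to cover the \emph{entire} subdivision of the edge $u_iu_{i+1}\in E(H)$, so $P$ is determined by the ordered sequence $u_1,\dots,u_k$ together with the two endpoint-choices within the first and last subdivision paths it touches. The number of ordered sequences $(u_1,\dots,u_k)$ forming a path in $H$ is at most $\prod_{i=1}^{k}\deg_H(u_i)$, and for each such sequence there are only $O(t^2)$ endpoint choices, so the number of candidate repetitive paths in $G$ of length $2t$ passing through $(u_1,\dots,u_k)$ can be encoded using at most $\sum_{i=1}^{k}\log_2(\deg_H(u_i)+1)+O(\log t)$ bits.

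Each uncoloring of such a path, on the other hand, releases $t\log_2 5$ bits of randomness back into the accounting. The subdivision hypothesis forces
\[
2t \;\geq\; \sum_{i=1}^{k-1}\Bigl(10^{5}\log_2(\deg_H(u_i)+1)+10^{5}\log_2(\deg_H(u_{i+1})+1)\Bigr),
\]
so the ``savings'' $t\log_2 5$ dominate the ``log cost'' $\sum_i\log_2(\deg_H(u_i)+1)+O(\log t)$ by a factor of roughly $10^{5}$. Comparing the number of possible $n$-step execution logs with the $5^n$ possible random tapes, the standard Moser--Tardos/\citet{GKM11}-style calculation then forces termination in finite expected time.

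The main technical hurdle is designing the per-event log so that, together with the partial colouring at the moment of halting, the entire sequence of random colour choices can be uniquely reconstructed (the classical Moser--Tardos requirement), while simultaneously respecting the $\sum_i\log_2(\deg_H(u_i)+1)+O(\log t)$ per-event bound. In particular, we must carefully specify the ordered branch-vertex sequence $u_1,\dots,u_k$, the endpoint offsets within the two extreme subdivision paths, and the resume point in the global processing order -- and verify that this suffices to recover the erased random bits. Once the encoding is in place, the very generous $10^5$ in the hypothesis gives ample slack in the entropy balance, and the final argument mirrors the one used for Theorem~\ref{thm:One}.
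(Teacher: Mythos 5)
Your outline captures the high-level entropy-compression skeleton but misses the single most important structural difficulty, and this gap is fatal to the plan as stated. You colour every vertex (including branch vertices) at random from size-$5$ lists and argue that the cost of encoding an erased path is dominated by the bits freed, invoking the subdivision hypothesis
\[
2t \;\geq\; \sum_{i=1}^{k-1}\Bigl(10^{5}\log(\deg_H(u_i)+1)+10^{5}\log(\deg_H(u_{i+1})+1)\Bigr).
\]
But when the repetitive path $P$ meets exactly one branch vertex ($k=1$), this sum is empty, so the hypothesis gives no lower bound on $|P|$ at all. Concretely: nothing stops the algorithm from creating a repetitively coloured $P_4 = (x_1,x_2,u,x_4)$ with $u$ a branch vertex of degree $D$, because $x_1$ may well receive the same random colour as $u$. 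Erasing the second half frees only $2\log 5$ bits, while reconstructing the path requires recording which of the roughly $D$ neighbours of $u$ is $x_4$, costing about $\log D$ bits. For large $D$ the cost exceeds the savings, and the injection from random tapes to (colouring, record) pairs breaks. This is exactly why the paper's proof precolours $Q=V(H)$ and then deletes $\psi(u)$ from the lists of the subdivision vertices in $M(u)$: that removal guarantees the ``mirror'' vertex of any branch vertex $u$ on a repetitive path lies at distance more than $g(u)+1$ from $u$, which forces a long run of degree-$2$ steps around every high-degree entry in the code sequence --- the $c$-spread property on which the bound $\beta\le 1.001$ rests. Without precolouring there is no such forced run, $\beta$ is unbounded, and the entropy balance fails.

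Two further points worth noting. First, precolouring shrinks the effective lists to $\ell=4$, which is why the paper cannot afford the generic $\approx 4^t$ Dyck-word count and must pass to ``special'' Dyck words (avoiding $0110110$), proving that realisable records cannot contain two consecutive $P_4$-erasures; your proposal never confronts this because you implicitly keep $\ell=5$, but as explained you cannot keep $\ell=5$ and also control the branch vertices. Second, your informal ``encode the ordered branch sequence plus endpoint offsets'' leaves out the bookkeeping that makes Moser--Tardos reconstruction go through: you need the analogue of the paper's niceness invariant and the tailored priority function so that the uncoloured set $X$ has a simple enough structure for Lemma~\ref{lem:P}-type inversion, and this too is tied to keeping $Q$ forever coloured.
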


Theorem~\ref{thm:Subdivision} is stronger than the above subdivision
results in the following respects: (1) it is for choosability not just
colourability; (2) it applies to every subdivision with \emph{at
  least} a certain number of division vertices per edge, and (3) the
required number of division vertices per edge is asymptotically fewer
than for the above results. Of course, Theorem~\ref{thm:Subdivision}
is weaker than the results in \cite{PZ09, BaratWood-EJC08} mentioned above 
in that the number of colours is 5.

Theorem~\ref{thm:Subdivision} is also proved using the
entropy-compression method mentioned above. An analogous theorem with
more colours and $\mathcal{O}(\log\Delta(G))$ division vertices per
edges can be proved using the Lov{\'a}sz Local Lemma; see Appendix~\ref{subLLL}.

\subsection{Pathwidth}

Thue's result was generalised in a different direction by
\citet{BGKNP-NonRepTree-DM07}, who proved that every tree is
nonrepetitively 4-colourable\footnote{No such result is possible for
choosability---\citet{FOOZ} proved that trees have arbitrarily high
nonrepetitive choice number. On the other hand, \citet{KozikMicek} proved
that $\tc(T)\leq \mathcal{O}(\Delta^{1+\eps})$ for every tree $T$ with maximum
degree $\Delta$.}. This result was further generalised by
considering treewidth\footnote{A \DEF{tree decomposition} of a graph
  $G$ consists of a tree $T$ and a set $\{B_x\subseteq V(G):x\in
  V(T)\}$ of subsets of vertices of $G$, called \DEF{bags}, indexed by
  the vertices of $T$, such that (1) the endpoints of each edge of $G$
  appear in some bag, and (2) for each vertex $v$ of $G$ the set
  $\{x\in V(T):v\in B_x\}$ is nonempty and induces a connected subtree
  of $T$. The \DEF{width} of the tree decomposition is
  $\max\{|B_x|-1:x\in V(T)\}$. The \DEF{treewidth} of $G$ is the
  minimum width of a tree decomposition of $G$. A \DEF{path
    decomposition} is a tree decomposition whose underlying tree is a
  path. Thus a path decomposition is simply defined by a sequence of
  bags $B_1,\dots,B_p$. The \DEF{pathwidth} of $G$ is the minimum
  width of a path decomposition of $G$.}, which is a parameter that
measures how similar a graph is to a tree. \citet{KP-DM08} and
\citet{BV-NonRepVertex07} independently proved that graphs of bounded
treewidth have bounded nonrepetitive chromatic number. The best upper
bound is due to \citet{KP-DM08}, who proved that $\pi(G)\leq 4^\width$ for
every graph $G$ with treewidth $\width$.  The best lower bound is due to
\citet{Albertson-EJC04}, who described graphs $G$ with treewidth $\width$
and $\pi(G)\geq\st(G)=\binom{\width+2}{2}$.  Thus there is a quadratic
lower bound on $\pi$ in terms of treewidth. It is open whether $\pi$
is bounded from above by a polynomial function of treewidth. We prove
the following related result.

\begin{theorem}
  \label{thm:Pathwidth}
  For every graph $G$ with pathwidth $\width$,
$$\pi(G)\leq 2\width^2+6\width+1\enspace.$$
\end{theorem}

It is open whether $\pi(G)\in\mathcal{O}(\width)$ for every graph $G$ with
pathwidth $\width$. For treewidth, a quadratic lower bound on $\pi$ follows
from the quadratic lower bound on $\st$, as explained above. However,
we show that no such result holds for pathwidth.

\begin{theorem}
  \label{thm:StarPathwidth}
For every graph $G$ with pathwidth $\width$,
$$\st(G)\leq 3\width+1\enspace.$$
\end{theorem}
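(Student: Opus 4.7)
The plan is to reduce to interval graphs and run a greedy colouring argument. A graph of pathwidth at most $k$ is a subgraph of some interval graph with clique number at most $k+1$, and since the star chromatic number is subgraph-monotone, we may assume $G$ itself is such an interval graph. Fix an interval representation $\{[L(v),R(v)] : v \in V(G)\}$ with maximum clique $k+1$, and process the vertices in increasing order of $L(v)$ (ties broken arbitrarily), colouring greedily from the palette $\{1,\ldots,3k+1\}$.

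When it is time to colour $v$, its earlier neighbours $N^-(v)$ are exactly the intervals other than $v$ containing the point $L(v)$, and these pairwise intersect, so they form a clique of size at most $k$ whose colours in the already-built partial colouring are pairwise distinct. Two consequences follow: properness forbids at most $k$ colours, and $v$ cannot play the role of an internal vertex of a new bichromatic $P_4 = p_1 p_2 p_3 p_4$. Indeed, if $v = p_2$ (symmetrically $v = p_3$), the two path-neighbours of $v$ would both lie in the clique $N^-(v)$ and would be forced to share a colour, contradicting the rainbow property of that clique. Hence $v$ must appear as an endpoint ($p_1$ or $p_4$) of any new bichromatic $P_4$.

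In the case $v = p_1$, the forbidden colour is $\psi(p_3)$ for some $p_2 \in N^-(v)$, $p_3 \in N(p_2) \setminus \{v\}$, and $p_4 \in N(p_3) \setminus \{v,p_2\}$ with $\psi(p_4) = \psi(p_2)$; the case $v = p_4$ is symmetric. A short interval argument shows we may assume $p_3 \notin N^-(v)$ (otherwise $\psi(p_3)$ is already forbidden by properness), whence $R(p_3) < L(v)$. The disjointness of the colour class $\psi(p_2)$ (it is an independent set of intervals) combined with the fact that $p_4$ must be earlier than $v$ then forces $R(p_4) < L(p_2)$ and consequently $L(p_3) < L(p_2)$, placing $p_3$ in the clique $N^-(p_2)$. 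Therefore $\psi(p_3)$ belongs to a palette of at most $k$ colours used on $N^-(p_2)$.

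The main obstacle is to prove that, summing over $p_2 \in N^-(v)$ and the symmetric case $v = p_4$, the total number of distinct star-forbidden colours is at most $2k$ rather than the $k^2$ given by the naive union bound. Securing this tight bound requires exploiting two structural facts: the uniqueness of star centres in each bichromatic subgraph of the partial star colouring, which limits to one the number of candidate $p_3$ of each given colour adjacent to a given $p_2$; and the substantial overlap between the cliques $N^-(p_2)$ for distinct $p_2 \in N^-(v)$ forced by the interval representation, which makes many of the per-$p_2$ forbidden colours coincide. Together with the at most $k$ properness-forbidden colours, this yields a total of at most $3k$ forbidden colours, leaving at least one colour in $\{1,\ldots,3k+1\}$ available for $v$ and completing the greedy construction.
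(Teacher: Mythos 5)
Your proposal takes a genuinely different route from the paper's, but it has a critical unfilled gap at exactly the step you yourself flag as ``the main obstacle.''

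The early parts of your argument are sound: reducing to an interval supergraph $G'$ with $\omega(G')=k+1$, processing by left endpoint, noting that $N^-(v)$ is a clique of size at most $k$ on which the partial colouring is rainbow, deducing that $v$ can only appear as an endpoint $p_1$ or $p_4$ of any new bichromatic $P_4$, and the interval-geometry argument placing $p_3$ in $N^-(p_2)$. But then you concede that the naive union bound on star-forbidden colours is $k^2$, and you assert---without proof---that ``uniqueness of star centres'' plus ``substantial overlap between the cliques $N^-(p_2)$'' reduces this to $2k$. Neither fact, as stated, obviously gives the bound: the cliques $N^-(p_2)$ for distinct $p_2 \in N^-(v)$ need not share enough vertices, and the set of colours appearing on their union can a priori exceed $2k$; nor do you show how the star-centre constraint prunes which of those colours are actually forbidden. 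Also unaddressed: the $v=p_4$ case forbids $\psi(p_2)$ rather than $\psi(p_3)$, and the corresponding vertex $p_2$ satisfies $R(p_2)<L(v)$, so it is not localised inside any $N^-(\cdot)$ clique you control; your ``symmetry'' claim does not close this case. As written, the greedy argument establishes only a bound of roughly $k + k^2$ colours, not $3k+1$.

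For comparison, the paper avoids the counting problem altogether by a decomposition: after passing to the interval supergraph $G'$ with $\omega(G')=k+1$, it extracts an inclusion-minimal set $X$ whose intervals cover all others. Minimality forces $G'[X]$ to be an induced path, which is star $3$-coloured cyclically by $i \bmod 3$; the remainder $G'-X$ is an interval graph with $\omega \leq k$, handled by induction with a disjoint colour palette; and a short case analysis (exploiting the minimality of $X$) shows no $2$-coloured $P_4$ can use vertices from both $X$ and $G'-X$. This gives the $3+3(k-1)+1 = 3k+1$ bound directly, with no greedy bookkeeping. If you want to salvage the greedy approach, the missing piece is a genuine proof that at most $2k$ colours are star-forbidden at each step; as it stands, that is an unproven conjecture rather than a lemma.
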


\section{An Algorithm}

This section presents and analyses an algorithm for nonrepetitively list colouring a graph. This machinery will be used to prove Theorems~\ref{thm:One} and \ref{thm:Subdivision} in the following sections.

If a set $X$ is linearly ordered according to some fixed ordering and $e \in X$, then the \DEF{index} of $e$ in $X$ is the position of $e$ in this ordering of $X$. Such an ordering induces in a natural way an ordering of each subset $Y$ of $X$, so that the index of an element $e \in Y$ {\em in $Y$} is well defined.

Let $G$ be a fixed $n$-vertex graph. Assume that $V(G)$ is ordered
according to some arbitrary linear ordering.  Let $L$ be a list
assignment for $G$.  Assume each list in $L$ has size $\ell$.
Identify colours with nonnegative integers. Thus the colours in $L(v)$
are ordered in a natural way, for each $v\in V(G)$.  Without loss of
generality, the colour $0$ is in none of these lists.  In what
follows, we consider an uncoloured vertex to be coloured $0$.  A
\DEF{precolouring} of $G$ is a colouring $\psi$ of $G$ such that
$\psi(v) \in L(v)\cup\{0\}$ for each $v\in V(G)$.  If $\psi(v)\neq 0$
then $v$ is said to be \DEF{precoloured} by $\psi$.

For each path $P$ of $G$ with $2k$ vertices, for each subset
$X\subseteq V(G) - V(P)$, and for each vertex $v \in V(P)$, define
$\lambda(P, X, v)$ to be the sequence $(\lambda_{1}, \dots, \lambda_{2k})$ obtained as
follows: Let $x, y$ be the two endpoints of $P$, with $v$ closer to
$y$ than to $x$ in $P$.  Let $v_{1}, \dots, v_{p}$ be the vertices of
$P$ from $v_{1}:=v$ to $v_{p}:=x$ defined by $P$, in order. (Observe
that $p \geq 2$ since $v \neq x$.)  Let $\lambda_{1}$ be the index of
$v_{2}$ in $N(v_{1}) - X$, and for each $i\in [2, p-1]$, let $\lambda_{i}$
be the index of $v_{i+1}$ in $N(v_{i}) - (X \cup \{v_{i-1}\})$.  Let
$\lambda_{p} := -1$.  If $v = y$ then $p=2k$ and the sequence $(\lambda_{1},
\dots, \lambda_{2k})$ is completely defined. If $v \neq y$ then let $q:=
2k-p + 1$ and let $w_{1}, \dots, w_{q}$ be the vertices of $P$ from
$w_{1}:=v$ to $w_{q}:=y$ defined by $P$, in order.  Let $\lambda_{p+1}$ be
the index of $w_{2}$ in $N(w_{1}) - (X \cup \{v_{2}\})$, and for each
$i\in [2, q-1]$, let $\lambda_{p + i}$ be the index of $w_{i+1}$ in
$N(w_{i}) - (X \cup \{w_{i-1}\})$.

An important feature of the above encoding of the triple $(P, X, v)$
as a sequence $\lambda(P, X, v)$ is that it can be reversed, as we now
explain.

\begin{lemma}
  \label{lem:P}
  Suppose $\lambda = \lambda(P, X, v)$ for some even path $P$ of $G$ such that $X
  \subseteq V(G) - V(P)$, and $v\in V(P)$.  Then, given $\lambda$, $X$, and
  $v$, one can uniquely determine the path $P$.
\end{lemma}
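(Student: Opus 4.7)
The plan is to give an explicit decoding procedure that, from $s$, $X$ and $v$, recovers the vertices of $P$ one by one, and to argue that every step of the decoding is forced.

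First I would observe that the marker value $-1$ appears in $s$ exactly once, namely at position $p$, since all other entries are indices into some (nonempty) subset of vertices and are therefore nonnegative. So from $s$ alone we can read off $p$, and hence also $q = 2k - p + 1$, which tells us how to split $s$ into the ``first half'' $s_1,\dots,s_{p-1}$ (encoding the subpath from $v$ to $x$) and ``second half'' $s_{p+1},\dots,s_{2k}$ (encoding the subpath from $v$ to $y$).

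Next I would reconstruct the first branch inductively. Set $v_1 := v$; this is given. The set $N(v_1) - X$ is determined from $G$ and $X$, its ordering is inherited from the fixed ordering of $V(G)$, and $s_1$ is by definition the index of $v_2$ in this ordered set, so $v_2$ is uniquely recovered. Having reconstructed $v_1,\dots,v_i$ for some $i\in[2,p-1]$, the set $N(v_i) - (X\cup\{v_{i-1}\})$ is determined, and $s_i$ is the index of $v_{i+1}$ in it, so $v_{i+1}$ is uniquely recovered. Iterating up to $i = p-1$ yields $x = v_p$. The second branch is reconstructed in exactly the same way: $w_1 := v$, and $w_2$ is the element of index $s_{p+1}$ in $N(w_1) - (X \cup \{v_2\})$, where $v_2$ is already known from the first branch; then $w_3,\dots,w_q = y$ are obtained by the same inductive step, using the excluded vertex $w_{i-1}$ at each stage.

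Finally, I would assemble the two branches: $P$ is the path $v_p, v_{p-1}, \dots, v_2, v_1 = w_1, w_2, \dots, w_q$. Since every vertex of $P$ has been determined uniquely by $s$, $X$ and $v$ at some step of the decoding, $P$ itself is uniquely determined, which is what the lemma asserts. There is essentially no obstacle here beyond bookkeeping; the only point that requires a moment of care is confirming that the marker $-1$ unambiguously locates the split between the two halves, which is immediate from the fact that every other $s_i$ is a nonnegative index.
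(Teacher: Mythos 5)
Your proposal is correct and takes essentially the same approach as the paper's proof: read off $p$ from the unique $-1$ entry, then inductively decode the two branches outward from $v$ using the indices $s_i$ and the fact that the relevant neighbourhood sets are determined at each step. The only cosmetic difference is that the paper packages the recovered vertices into a single sequence $u_1,\dots,u_{2k}$, whereas you keep the $v_i$/$w_j$ notation from the encoding; the logic is identical.
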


\begin{proof}
  Let $\lambda = (\lambda_{1}, \dots, \lambda_{2k})$ and let $p\in [2, 2k]$ be the
  unique index such that $\lambda_{p} = -1$.  Let $u_{p} := v$, let
  $u_{p-1}$ be the $\lambda_{1}$-th vertex in $N(u_{p}) - X$, and for $i =
  p-2, \dots, 1$, let $u_{i}$ be the $\lambda_{p - i}$-th vertex in
  $N(u_{i+1}) - (X \cup \{u_{i+2}\})$.  Next, for $j = p+1, \dots,
  2k$, let $u_{j}$ be the $\lambda_{j}$-th vertex in $N(u_{j-1}) -
  (X\cup\{u_{j-2}\})$.  Then the vertices $u_{1},u_{2},\dots,u_{2k}$,
  in this order, determine a path $P$ of $G$ such that $\lambda(P, X,v) =
  \lambda$.

  Observe that if $P'$ is an even path of $G$ such that $X\subseteq
  V(G) - V(P')$, $v\in V(P')$, and $P'$ is distinct from $P$, then
  $\lambda(P', X, v) \neq \lambda(P, X, v)$.  Therefore, the path $P$ above is
  uniquely determined.
\end{proof}

Let $\S$ be the set of all sequences $\lambda(P, X, v)$ where $P$ is an even
path in $G$, $X\subseteq V(G) - V(P)$, and $v$ is a vertex of $P$.  A
\DEF{record} is a mapping $R:\N \to \S \cup \{\varnothing\}$.  The
\DEF{empty record} is the record $R$ such that $R(i) = \varnothing$
for all $i \in \N$.

A \DEF{priority function} is a function $f$ that associates to each
nonempty subset $X$ of $V(G)$ a vertex $f(X) \in X$.  Consider
Algorithm~\ref{alg:delta}, which (for a fixed graph $G$, a list
assignment $L$, a priority function $f$, and a precolouring $\psi$)
takes as input a positive integer $t$ and a vector $(c_{1},\dots,c_{t}) \in [1,\ell]^{t}$. Note that precoloured vertices and a
specific priority function will only be needed when proving the result
on subdivisions.  We thus invite the reader to first consider the set
$Q$ of precoloured vertices to be empty, and the priority function $f$
to be arbitrary (for instance, $f(X)$ could be the first vertex in $X$
in the fixed ordering of $V(G)$). Also note that the choice of the
repetitively coloured path $P$ in the algorithm is assumed
to be consistent; that is, according to some (arbitrary) fixed
deterministic rule.

\begin{algorithm}[h]
  \algsetup{indent=2em}
  \caption{$L$-colouring the graph $G$, where $f$ is a priority
    function, $\psi$ is a precolouring of $G$, and $Q$ is the set of
    precoloured vertices under $\psi$.}
  \label{alg:delta}
  \begin{algorithmic}
    \REQUIRE $(c_{1}, \dots, c_{t}) \in [1,\ell]^{t}$ \ENSURE a
    (possibly invalid) colouring $\phi$ and a record $R$ \STATE $i
    \gets 1$ \STATE $\phi \gets \psi$ \STATE $R \gets $ empty record
    \STATE $X \gets V(G) - Q$ \WHILE {$i\leq t$ and $X \neq
      \varnothing$} \STATE $v \gets f(X)$ \STATE $\phi(v) \gets $
    $c_{i}$-th colour in $L(v)$ 
    \STATE $X \gets X -  \{v\}$
    \IF {$G$ contains a repetitively
      coloured path $P$} \STATE divide $P$ into first half $P_{1}$ and
    second half $P_{2}$ so that $v\in V(P_{2})$ \FOR {$w\in V(P_{2}) -
      Q$} \STATE $\phi(w) \gets 0$
    \ENDFOR
    \STATE $R(i) \gets \lambda(P, X, v)$ \STATE $X \gets X \cup (V(P_{2}) -
    Q)$ \ELSE \STATE $R(i) \gets \varnothing$ 
    \ENDIF
    \STATE $i\gets i+1$
    \ENDWHILE
    \RETURN return $\phi$, $R$
  \end{algorithmic}
\end{algorithm}

Say that the algorithm \DEF{succeeds} if it terminates
with $X = \varnothing$, and \DEF{fails} otherwise.  It is easily seen
that if the algorithm succeeds, then the produced colouring $\phi$ is
a nonrepetitive $L$-colouring of $G$.  For a given integer $t \geq 1$,
let $\F_{t}$ be the set of vectors $(c_{1},\dots,c_{t})\in[1,\ell]^{t}$ on which the algorithm fails.  Let $\A_{t}$ be the set of pairs $(\phi, R)$ that are produced by the algorithm on vectors in $\F_{t}$.  Let $\RR_{t}$ be the set of distinct records $R$ that can be produced by the algorithm
on vectors in $\F_{t}$.  For $R \in \RR_{t}$, let $\F_{t, R}$ be the
set of vectors $(c_{1}, \dots, c_{t}) \in \F_{t}$ on which the
algorithm produces record $R$.  (Thus $\F_{t, R} \neq \varnothing$.)
For a vector $(c_{1}, \dots, c_{t}) \in \F_{t}$, let the \DEF{trace}
$\tr(c_{1}, \dots, c_{t})$ be the vector $(X_{1}, \dots, X_{t})$ where
$X_{i}$ is the set $X$ at the beginning of the $i$-th iteration of the
while-loop of the algorithm on input $(c_{1}, \dots, c_{t})$, for each
$i\in [1,t]$. (Observe that $X_1$ always equals $V(G) - Q$.)  Finally,
let $\T_{t} := \{\tr(c_{1}, \dots, c_{t}) : (c_{1}, \dots, c_{t}) \in
\F_{t}\}$.

The next lemma shows that for a fixed record $R \in \RR_{t}$, all the
vectors in $\F_{t, R}$ have the same trace.

\begin{lemma}
  \label{lem:trace}
  For every $t\geq 1$ there exists a function $h_{t}: \RR_{t} \to
  \T_{t}$ such that for each $R \in \RR_{t}$ and each $(c_{1}, \dots,
  c_{t}) \in \F_{t, R}$ we have $\tr(c_{1}, \dots, c_{t}) = h_{t}(R)$.
\end{lemma}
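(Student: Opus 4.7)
The plan is to prove the lemma by strong induction on the iteration index, showing that the set $X_i$ at the beginning of the $i$-th iteration of the while-loop is completely determined by the prefix $R(1),\dots,R(i-1)$ of the record, together with the fixed data $G$, $L$, $\psi$, $Q$, $f$, and the ordering of $V(G)$. Once this is established, the whole trace $(X_1,\dots,X_t)$ is a function of $R$ alone, which is precisely the content of the lemma.

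For the base case, observe that $X_1 = V(G) - Q$ by the initialisation, independently of $(c_1,\dots,c_t)$. For the inductive step, assume $X_i$ is determined by $R(1),\dots,R(i-1)$. Then $v := f(X_i)$ is also determined, since $f$ is fixed. Now consider $R(i)$, which is known. If $R(i) = \varnothing$, then the algorithm did not detect a repetitively coloured path at this iteration, and therefore $X_{i+1} = X_i - \{v\}$ is determined. If $R(i) = s \neq \varnothing$, then $s = s(P, X_i, v)$ for the repetitively coloured path $P$ found at this iteration (note that the $X$ appearing in the encoding $s(P, X, v)$ computed by the algorithm is exactly $X_i$, since the encoding is made before $X$ is updated). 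By Lemma~\ref{lem:P}, from $s$, $X_i$, and $v$ we can uniquely reconstruct $P$. Since the vertex $v$ lies in the second half $P_2$ of $P$ and $P$ has even order, the partition of $P$ into $P_1$ and $P_2$ is determined by $v$, so $V(P_2)$ is determined. Hence $X_{i+1} = X_i \cup (V(P_2) - Q)$ is also determined.

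This inductive definition gives a well-defined function $h_t: \RR_t \to \T_t$ by setting $h_t(R) := (X_1, X_2, \dots, X_t)$ as reconstructed above; by construction, $\tr(c_1, \dots, c_t) = h_t(R)$ for every $(c_1, \dots, c_t) \in \F_{t,R}$. The main (and essentially only) nontrivial ingredient is the invertibility of the encoding $(P, X_i, v) \mapsto s(P, X_i, v)$ given by Lemma~\ref{lem:P}; the rest is a direct bookkeeping induction that mirrors, step by step, the update rules of Algorithm~\ref{alg:delta}. No obstacle of a combinatorial or probabilistic nature arises here, so I would expect the full proof to be short.
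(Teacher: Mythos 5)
Your proposal is correct and takes essentially the same inductive approach as the paper: both arguments show that each $X_{i+1}$ is determined from $X_i$ and $R(i)$ via the algorithm's update rule, with Lemma~\ref{lem:P} supplying the reconstruction of $P$ in the nonempty case; the paper phrases the induction on $t$ by peeling off the last record entry, whereas you build the trace forward from $X_1$, but these are equivalent framings of the same argument.
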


\begin{proof}
  We construct $h_{t}$ by induction on $t$.  For $t=1$ simply let
  $h_{t}(R):= ( V(G) - Q )$ for each $R\in \RR_{t}$.

  Now assume that $t \geq 2$. Let $R \in \RR_{t}$.  Let $R'$ be the
  record obtained from $R$ by setting $R'(i) := R(i)$ for each $i \in
  \N$ such that $i \neq t$, and $R'(t) := \varnothing$.  Then $R' \in
  \RR_{t-1}$, and by induction, $h_{t-1}(R') = (X_{1}, \dots,
  X_{t-1})$ for some $(X_{1}, \dots, X_{t-1}) \in \T_{t-1}$.  Let
  $v_{t-1} := f(X_{t-1})$.

  First suppose that $R(t-1) = \varnothing$. Let $X_{t} := X_{t-1} -
  \{v_{t-1}\}$ and $h_{t}(R) := (X_{1}, \dots, X_{t-1}, X_{t})$.
  Consider a vector $(c_{1}, \dots, c_{t}) \in \F_{t, R}$.  Then
  $(c_{1}, \dots, c_{t-1}) \in \F_{t-1, R'}$, and by induction
  $\tr(c_{1}, \dots, c_{t-1}) = (X_{1}, \dots, X_{t-1})$.  Thus at the
  beginning of the $(t-1)$-th iteration of the while-loop in the
  algorithm on input $(c_{1}, \dots, c_{t})$, the current record is
  $R'$, and $v = v_{t-1}$ and $X = X_{t-1}$. Since $R(t-1) =
  \varnothing$, the algorithm subsequently coloured $v_{t-1}$ without
  creating any repetitively coloured path, implying that $X = X_{t-1}
  - \{v_{t-1}\} = X_{t}$ at the beginning of the $t$-th iteration.
  Hence $\tr(c_{1}, \dots, c_{t}) = (X_{1}, \dots, X_{t-1}, X_{t}) =
  h_{t}(R)$, as desired.

  Now assume that $R(t-1) = \lambda$ for some $\lambda \in \S$.  Using
  Lemma~\ref{lem:P}, let $P$ be the path of $G$ determined by $\lambda$,
  $X_{t-1}$, and $v_{t-1}$.  Let $P_{1}$ and $P_{2}$ denote the two
  halves of $P$, so that $v_{t-1} \in V(P_{2})$.  Let $X_{t} :=
  X_{t-1} \cup (V(P_{2}) - Q)$ and $h_{t}(R) := (X_{1}, \dots,
  X_{t-1}, X_{t})$.  Consider a vector $(c_{1}, \dots, c_{t}) \in
  \F_{t, R}$.  Then $(c_{1}, \dots, c_{t-1}) \in \F_{t-1, R'}$, and by
  induction $\tr(c_{1}, \dots, c_{t-1}) = (X_{1}, \dots, X_{t-1})$.
  As before, at the beginning of the $(t-1)$-th iteration of the
  while-loop in the algorithm on input $(c_{1}, \dots, c_{t})$, the
  current record is $R'$, and $v = v_{t-1}$ and $X = X_{t-1}$.  Then,
  after colouring $v$, the path $P$ is repetitively coloured, and all
  vertices in $P_{2}$ are subsequently uncoloured, except for those in
  $Q$. Hence we have $X = X_{t-1} \cup (V(P_{2}) - Q) = X_{t}$ at the
  beginning of the $t$-th iteration.  Therefore $\tr(c_{1}, \dots,
  c_{t}) = (X_{1}, \dots, X_{t-1}, X_{t}) = h_{t}(R)$.
\end{proof}

\begin{lemma}
  \label{lem:lossless}
  For every $(\phi, R) \in \A_{t}$ there is a unique vector $(c_{1},
  \dots, c_{t}) \in \F_{t}$ such that the algorithm
  produces $(\phi, R)$ on input $(c_{1}, \dots, c_{t})$.
\end{lemma}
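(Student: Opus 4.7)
The plan is to reconstruct $(c_1, \dots, c_t)$ uniquely from $(\phi, R)$ by running the algorithm \emph{backwards}. First apply Lemma~\ref{lem:trace} to obtain the trace $(X_1, \dots, X_t) = h_t(R)$; this fixes, for every $i$, the vertex $v_i := f(X_i)$ that the algorithm coloured in the $i$-th iteration. Setting $\phi_{t+1} := \phi$, I recover, for $i = t, t-1, \dots, 1$, both $c_i$ and the colouring $\phi_i$ present at the start of iteration $i$, using only $R(i)$, $X_i$, $v_i$, and $\phi_{i+1}$.

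If $R(i) = \varnothing$, then no repetition was created in iteration $i$, so the colour placed on $v_i$ survives to $\phi_{i+1}$. Thus $c_i$ is the unique index in $[1,\ell]$ such that the $c_i$-th element of $L(v_i)$ equals $\phi_{i+1}(v_i)$, and $\phi_i$ differs from $\phi_{i+1}$ only in that $\phi_i(v_i) = 0$.

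If $R(i) = s \in \S$, then by Lemma~\ref{lem:P} the triple $(s, X_i, v_i)$ determines the repetitively coloured path $P$ found in iteration $i$, together with its orientation $u_1, u_2, \dots, u_{2k}$ in which $v_i = u_p$ for some $p > k$. This orientation coincides with the algorithm's partition $V(P_1) = \{u_1, \dots, u_k\}$ and $V(P_2) = \{u_{k+1}, \dots, u_{2k}\}$, since in the definition of $s(P, X, v)$ the endpoint $x$ is distinguished as the one farther from $v_i$. Each $w = u_{k+a} \in V(P_2)$ then has a unique ``mate'' $w^{*} := u_a \in V(P_1)$. Vertices of $V(P_1)$ are unaffected by iteration $i$, so $\phi_i(w^{*}) = \phi_{i+1}(w^{*})$; the repetition condition gives $\phi_i(w) = \phi_i(w^{*})$, which is exactly the colour the algorithm saw on $w$ just before it was uncoloured. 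In particular, $c_i$ is the index in $L(v_i)$ of $\phi_{i+1}(v_i^{*})$, and $\phi_i$ is obtained from $\phi_{i+1}$ by replacing, for each $w \in V(P_2) - Q$, the entry $\phi_{i+1}(w) = 0$ by $\phi_{i+1}(w^{*})$.

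Because each step above is forced, the resulting vector $(c_1, \dots, c_t)$ is the only candidate, and running the algorithm on it reproduces $(\phi, R)$ by construction (so in particular it lies in $\F_t$). The one delicate point is that the mate relation used in the reconstruction must coincide with the one implicit in the algorithm's own splitting of $P$ into halves; this is what pins $c_i$ down. It is guaranteed because both the algorithm's labelling of $P_1, P_2$ and the orientation returned by Lemma~\ref{lem:P} are determined by the position of $v_i$ relative to the two endpoints of $P$.
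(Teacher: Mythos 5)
Your proof takes essentially the same backward-reconstruction approach as the paper: use Lemma~\ref{lem:trace} to pin down the trace and hence the vertex $v_i$ coloured at each iteration, then peel off one iteration at a time, recovering $c_i$ and the colouring at the start of iteration $i$ from the data $(R(i), X_i, v_i)$ and the colouring at the start of iteration $i+1$. The key observations — that Lemma~\ref{lem:P} recovers the path together with an orientation that agrees with the algorithm's $P_1/P_2$ split, and that the mate relation recovers the colours erased when $P_2$ is uncoloured — are all correct.

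One small slip in the case $R(i) = s$: your reconstruction rule assigns $\phi_i(v_i) := \phi_{i+1}(v_i^{*})$, since $v_i \in V(P_2) - Q$, but the colouring at the start of iteration $i$ actually has $\phi_i(v_i) = 0$ (because $v_i \in X_i$ is uncoloured there). The paper's proof sets $\phi'(v_t) := 0$ explicitly for this reason. Your slip does not in fact break the argument — the reconstruction only ever reads $\phi_j$ at vertices that carry a nonzero colour in the true $\phi_j$ (namely at $v_{j-1}$, or at a vertex of the first half of the path found in iteration $j-1$), so the spurious value at $v_i$ is never consulted — but as written your $\phi_i$ is not quite "the colouring present at the start of iteration $i$," contrary to what you claim. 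You should either set $\phi_i(v_i) := 0$ explicitly, or note that the discrepancy at $v_i$ is never read by later steps of the reconstruction.
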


\begin{proof}
  The proof is by induction on $t$.  This claim is true for $t=1$
  since in that case the unique vector $(c_{1}) \in \F_{1}$ yielding
  $(\phi, R)$ is the one where $c_{1}$ is the index of colour
  $\phi(v_{1})$ in the list $L(v_{1})$, where $v_{1} := f(V(G) - Q)$.

  Now assume that $t\geq 2$.  Let $(X_{1}, \dots, X_{t}) := h_{t}(R)$,
  where $h_{t}$ is the function in Lemma~\ref{lem:trace}.  Let $v_{t}
  := f(X_{t})$. (Recall that $X_{t} \neq \varnothing$.)  Let $R'$ be
  the record obtained from $R$ by setting $R'(i) := R(i)$ for each $i
  \in \N$ such that $i \neq t$, and $R'(t) := \varnothing$.  Then $R'
  \in \RR_{t-1}$, and $h_{t-1}(R') = (X_{1}, \dots, X_{t-1})$.

  First suppose that $R(t)=\varnothing$.  Let $\phi'$ be the colouring
  obtained from $\phi$ by setting $\phi'(v_{t}) := 0$ and $\phi'(w) :=
  \phi(w)$ for each $w \in V(G) - \{v_{t}\}$.  Then $(\phi', R') \in
  \A_{t-1}$, and by induction there is a unique input vector
  $(c'_{1},\dots,c'_{t-1}) \in \F_{t-1}$ for which the algorithm
  produces $(\phi', R')$. It follows that every vector
  $(c_{1},\dots,c_{t}) \in \F_{t}$ resulting in the pair $(\phi, R)$
  satisfies $c_{i} = c'_{i}$ for each $i \in [1, t-1]$. But  
  $c_{t}$ is also uniquely determined, since it is the index of colour
  $\phi(v_{t})$ in the list $L(v_{t})$. Hence there is a unique such
  vector $(c_{1}, \dots, c_{t})$.

  Now assume that $R(t) = \lambda$ for some $\lambda \in \S$.  Using
  Lemma~\ref{lem:P}, let $P$ be the path of $G$ determined by $\lambda$,
  $X_{t}$, and $v_{t}$.  Let $P_{1}$ and $P_{2}$ denote the two halves
  of $P$, so that $v_{t} \in V(P_{2})$.  Let $w_{1}, \dots, w_{2k}$
  denote the vertices of $P$, in order, so that $V(P_{1}) = \{w_{1},
  \dots, w_{k}\}$ and $V(P_{2}) = \{w_{k+1}, \dots, w_{2k}\}$.  Let $j
  \in [1,k]$ be the index such that $w_{k+j} = v_{t}$.  Let $\phi'$ be
  the colouring obtained from $\phi$ by setting $\phi'(v_{t}) := 0$,
  $\phi'(w_{k+i}) := \phi(w_{i})$ for each $i\in [1,k]$ such that
  $i\neq j$ and $w_{k+i} \notin Q$, and $\phi'(w) := \phi(w)$ for each
  $w \in V(G) - (V(P_{2}) - Q)$.  Then $(\phi', R') \in \A_{t-1}$, and
  by induction there is a unique vector $(c'_{1}, \dots, c'_{t-1}) \in
  \F_{t-1}$ on the input of which the algorithm produces $(\phi',
  R')$. It follows that every vector $(c_{1}, \dots, c_{t}) \in
  \F_{t}$ resulting in the pair $(\phi, R)$ satisfies $c_{i} = c'_{i}$
  for each $i \in [1, t-1]$. Moreover, $c_{t}$ is the index of colour
  $\phi(v_{t})$ in the list $L(v_{t})$, and therefore is also uniquely
  determined.
\end{proof}

Lemma~\ref{lem:lossless} implies that $|\A_{t}| = |\F_{t}|$ for all $t
\geq 1$.

Recall that $\S$ is the set of all sequences $\lambda(P, X, v)$ where $P$ is
an even path in $G$, $X\subseteq V(G) - V(P)$, and $v \in
V(P)$.  Once we fix a precolouring $\psi$ of $G$ and a priority
function $f$, as we did above, some triples $(P, X, v)$ will never be
considered by the algorithm on any input.  (For instance, this is the
case if $X$ contains a precoloured vertex.)  This leads to the following definition. 
A sequence  $\lambda \in \S$ is \DEF{realisable} (with respect to $\psi$ and $f$) if $R(i) = \lambda$ for some $t \geq 1$, $R \in \RR_{t}$, and $i \in [1,t]$. For each $k \in [1, \lfloor \tfrac{n}{2} \rfloor]$, let $\alpha_{k}$ be the number of realisable sequences of length $2k$ in $\S$. Define
$$\beta:=\max\{1,\max\{(\alpha_{k})^{1/k}: 1 \leq k \leq \floor{\tfrac{n}{2}}\}\}\enspace.$$ Thus $\beta\geq1$ and $\alpha_{k} \leq
\beta^{k}$ for each $k \in [1, \lfloor \tfrac{n}{2} \rfloor]$.
 
A \emph{substring} of some sequence or word is a subsequence of consecutive elements. A \emph{prefix} of a sequence is a substring starting  at the first element. A \DEF{Dyck word} of length $2t$ is a binary sequence with $t$ zeroes and $t$ ones such that the number of zeroes is at least the number of ones in every prefix of the sequence.  
 
Let $R\in \RR_{t}$. That is, $R$ is a record that can be produced by the algorithm on some vector $(c_{1}, \dots, c_{t}) \in[1,\ell]^{t}$ on which the algorithm fails. By Lemma~\ref{lem:trace},  $\tr(c_{1}, \dots, c_{t}) = h_{t}(R)$. That is, the vector $(X_{1}, \dots, X_{t})$ is determined by $R$, where $X_{i}$ is the set $X$ at the beginning of the $i$-th iteration of the while-loop of the algorithm on input $(c_{1}, \dots, c_{t})$. For each $i \in [1,t]$, let $r_{i}:=|X_{i+1}|-|X_i|+1$. Note that $r_i$ is the number of vertices that are uncoloured at step $i$. In particular, at step $i$, if $G$ contained no repetitively coloured path, then $r_i=0$,  and if $G$ contained a repetitively coloured path $P$ with second half $P_2$, then $r_i=|V(P_2)-Q|$. We emphasise, however, that $r_i$ is determined by $R$. Define $z(R):= t-\sum_{i=1}^{t} r_{i}$.  Observe that $z(R)$ equals the number of coloured vertices in $V(G)-Q$ at the end of any execution of the algorithm that produces the record $R$. (Recall that a vertex of colour $0$ is interpreted as being uncoloured.)  In particular,
  $z(R) \geq 1$, since there is always at least one coloured vertex, and $z(R) \leq n$. Associate with $R$ the word
$$D(R):=01^{r_{1}}01^{r_{2}}\dots01^{r_{t}}1^{z(R)} .$$  
Then $D(R)$ is a Dyck word of length $2t$.  A $0$  in $D(R)$ corresponds to the colouring of a vertex in the algorithm,  while  a $1$ corresponds to the uncolouring of a vertex, {\em except} for the last $z(R)$ $1$'s, which are added in order to ensure that the number of $0$'s and $1$'s in $D(R)$ is the same. 

  Conversely, a Dyck word $d$ is \DEF{realisable} if there exist $t \geq 1$ and $R \in \RR_{t}$ such that $D(R) = d$.    The set of realisable Dyck words of length $2t$ is denoted $\DD_{t}$.

\section{Bounded Degree Proof}

The next result is a precise version of Theorem~\ref{thm:One}.  The proof makes use of the symbolic  approach to combinatorial enumeration via generating functions. We refer the reader to the book by Flajolet and Sedgewick~\cite{FS09} for background on this topic,  as well as for undefined terms and notations. We postpone these technicalities until the end of the section. Note that we do not attempt to optimize the lower order terms in the proof of Theorem~\ref{thm:first_theorem_made_precise}. 

\begin{theorem}
\label{thm:first_theorem_made_precise}
For every graph $G$ with maximum degree $\Delta >1$,
$$\tc(G) \leq \left\lceil \coeff \Delta^2 \right\rceil\enspace.$$
\end{theorem}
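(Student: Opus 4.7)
The plan is to execute the algorithm of Section~2 with list size $\ell := \lceil \coeff\Delta^2 \rceil$, empty precolouring ($Q=\varnothing$), and any priority function $f$, and to show via an entropy-compression argument that $|\F_t| < \ell^t$ for every sufficiently large $t$. This forces the existence of some input $(c_1,\dots,c_t)\in[1,\ell]^t$ on which the algorithm succeeds, producing a nonrepetitive $L$-colouring of $G$. By Lemma~\ref{lem:lossless}, $|\F_t|=|\A_t|\leq(\ell+1)^n|\RR_t|$, so the whole argument reduces to bounding the number of records.

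The first step is to bound the realisable-sequence counts. Inspecting the definition of $s(P,X,v)$, each sequence of length $2k$ has one entry equal to $-1$ (whose position is among $2k$ choices), one indexing entry bounded by $\Delta$ (the first, into $N(v)\setminus X$), and $2k-2$ further indexing entries bounded by $\Delta-1$ (since each also excludes the previous vertex). Hence $\alpha_k \leq 2k\,\Delta\,(\Delta-1)^{2k-2}$. The next step is to enumerate records by their Dyck shape $D(R)$ together with the actual sequences $R(i)$; grouping by shape yields $|\RR_t|\le \sum_{R\in\RR_t}\prod_{i=1}^t \alpha_{r_i}$ (with the convention $\alpha_0:=1$).

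I would recast this sum as the coefficient $[z^t]B(z)$ of a weighted plane-tree generating function. Setting $\phi(u) := 1 + \sum_{r\ge 1}\alpha_r u^r$, let $B(z)$ be the unique function, analytic at $0$, satisfying $B(z)=z\,\phi(B(z))$. This falls into the aperiodic smooth inverse-function schema of Definition~\ref{def:smooth inverse schema}: $\phi(0)=1$, $[u^r]\phi(u)\geq 0$, $\phi$ is not affine, and the characteristic equation $\phi(\tau)=\tau\phi'(\tau)$ has a unique positive root within the disc of convergence. Theorem~\ref{thm:smooth inverse asymptotics} then yields $[z^t]B(z)\sim C\rho^{-t}/\sqrt{t^3}$ with $\rho=\tau/\phi(\tau)$, and for large $t$ the inequality $|\F_t|<\ell^t$ holds as soon as $\ell > 1/\rho$. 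Plugging the bound $\alpha_r\le 2r\,\Delta(\Delta-1)^{2r-2}$ into $\phi$ gives
\[
\phi(u) \le 1 + \frac{2\beta\,u}{\Delta\,(1-\beta u)^2},\qquad \beta:=(\Delta-1)^2,
\]
and after substituting $y:=\beta\tau$ the characteristic equation $\phi(\tau)=\tau\phi'(\tau)$ simplifies to $\Delta(1-y)^3 = 4y^2$. The dominant root satisfies $1-y \sim \Delta^{-1/3}$, and a careful expansion of $1/\rho = \beta\,\phi(\tau)/y$ produces exactly $1/\rho = \coeff\,\Delta^2$, as required.

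The main technical obstacle is this last quantitative optimisation. A naive bound $\phi(u)=1/(1-\beta u)$ (dropping the polynomial factor $r$ inside $\alpha_r$) already yields $\rho=1/(4\beta)$ and so $\ell\geq 4\Delta^2$, worse than the target by a factor of $4$. Retaining the linear factor $r$ in $\alpha_r$ is what pushes the critical value $y$ close to $1$ and generates the $\Delta^{-1/3}$-scale correction responsible for the shape of $\coeff$; the remaining expansion of the cubic $\Delta(1-y)^3=4y^2$ is routine but has to be done carefully enough to preserve the leading constant $1$ in front of $\Delta^2$.
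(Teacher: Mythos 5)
Your overall plan follows the paper's proof closely: run the algorithm with $\ell := \lceil \coeff\Delta^2\rceil$, bound $|\RR_t|$ by reducing to a weighted Dyck-shape enumeration, cast this as a generating function $B(z)$ satisfying $B = z\phi(B)$, and apply the smooth inverse-function schema to extract exponential growth. The equivalence between summing over Dyck shapes with weights $\alpha_{r_i}$ and the paper's $\Dp$-word decomposition is real, and your $\phi(u)=1+\sum_{r\geq 1}\alpha_r u^r$ is related to the paper's $\phi$ by the substitution $u\mapsto\Delta^2u$, which only rescales $z$ and cancels against the $\Delta^{2t}$ factor in $\ell^t$. (The line ``$|\RR_t|\le\sum_{R\in\RR_t}\prod_i\alpha_{r_i}$'' should of course be a sum over Dyck shapes, not over records; otherwise it is a tautology.)

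The genuine gap is in your bound $\alpha_k\leq 2k\,\Delta(\Delta-1)^{2k-2}$, and specifically the extra factor $2$. You allow the sentinel $-1$ to sit in any of the $2k$ positions, but the algorithm always has $v\in V(P_2)$, which forces the position $p$ of the $-1$ to lie in $[k+1,2k]$, giving only $k$ choices (the paper exploits exactly this). This factor $2$ is not a lower-order nuisance: it changes the characteristic equation from $(1-\tau)^3=2\tau^2/\Delta$ (in the paper's normalisation) to $(1-\tau)^3=4\tau^2/\Delta$, which shifts $1-\tau$ from roughly $2^{1/3}\Delta^{-1/3}$ to $4^{1/3}\Delta^{-1/3}$, and the growth rate from roughly $\Delta^2\bigl(1+\tfrac{3}{2}2^{1/3}\Delta^{-1/3}\bigr)\approx\Delta^2(1+1.89\Delta^{-1/3})$ to $\Delta^2\bigl(1+\tfrac{3}{2}4^{1/3}\Delta^{-1/3}\bigr)\approx\Delta^2(1+2.38\Delta^{-1/3})$. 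Since $\coeff\approx 1+2\Delta^{-1/3}$, your $1/\rho$ exceeds $\coeff\Delta^2$ for all large $\Delta$ (already at $\Delta=1000$ one finds $1/\rho\approx1.261\times 10^6$ versus $\coeff\Delta^2\approx1.211\times 10^6$), so the claimed inequality $\ell>1/\rho$ fails.

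Two smaller points. First, the claim that a careful expansion ``produces exactly $1/\rho=\coeff\Delta^2$'' is not true even after fixing the factor of $2$: the paper does not evaluate $\phi(u)/u$ at the characteristic root $\tau$ but rather at the suboptimal point $\tau'=1-\Delta^{-1/3}$ (in its normalisation), where $\phi(\tau')/\tau'$ is \emph{exactly} $\coeff$; the actual $1/\rho=\phi(\tau)/\tau$ is strictly smaller. Chasing the cubic for the true $\tau$ is both unnecessary and does not produce the clean closed form. Second, from $\alpha_r\leq 2r\Delta(\Delta-1)^{2r-2}$ the correct consequence is $\phi(u)\leq 1+\frac{2\Delta\,u}{(1-\beta u)^2}$, not $1+\frac{2\beta u}{\Delta(1-\beta u)^2}$; this particular slip happens not to change the characteristic equation you wrote, but it would matter if one tried to make the lower-order terms precise.
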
 
\begin{proof}
  Let $G$ be a graph with maximum degree $\Delta$. Fix an ordering of
  $V(G)$. Let $n:=|V(G)|$ and let $L$ be a list assignment of
  $G$. Assume each list in $L$ has size 
  $\ell:= \left\lceil d  \Delta ^2 \right\rceil$, where $$d:=1+ \frac{1}{\Delta^{1/3}-1}+\frac{1}{\Delta^{1/3}}\enspace.$$
Let $f$ be an arbitrary priority   function. Consider the algorithm on $G$, where none of   the vertices of $G$ are precoloured (thus $Q=\emptyset$). 	  We will prove that   $|\mathcal{A}_t |= o(d^t     \Delta^{2t})$.      It suffices to show that     $|\mathcal{R}_t| =   o(d^t \Delta^{2t})$     since the number of distinct   colourings that can be produced by the algorithm is at most
  $(\ell+1)^n$ (taking into account the extra colour 0).
	
  Let $\lambda=(\lambda_1, \ldots, \lambda_{2k})$ be a realisable sequence in $\S$. Observe
  that $\lambda_j\neq \Delta$ for each $j\in [2,2k]$. Also, there is a
  unique index $p\in [k+1,2k]$ such that $\lambda_p=-1$. Thus $\lambda_1\in
  [1,\Delta]$ and $\lambda_p=-1$ and $\lambda_j\in [1,\Delta-1]$ for each $j\in
  [1,2k]\setminus \{1,p\}$. Hence there are at most $k \Delta
  (\Delta-1)^{2(k-1)}$ realisable sequences of length $2k$ in $\S$. Therefore
  $\alpha_k < k \Delta^{2k-1}$.
	
Let $d=(d_1, \ldots,d_{2t})$ be a realisable Dyck word of length   $2t$. Suppose that $d$ has the form   $0^{l_1}1^{k_1}0^{l_2}1^{k_2}\dots 0^{l_q}1^{k_q}1$, for some  positive integers $q,l_1, \ldots, l_q, k_1, \ldots, k_q$. Note that  $\sum_{j=1}^q k_j = t-1$. Associate with the word $d$ a weight   $w(d):= k_1 k_2 \dots k_q \Delta^{-q}.$ Clearly, for every $i\in[0,k_q]$ the number of distinct records $R\in \mathcal{R}_t$ with
  $z(R)=i+1$ such that $D(R)=d$ does not exceed 
  \begin{align*}
  \alpha_{k_1} \cdots \alpha_{k_{q-1}} \alpha_{k_q-i} 
  & < k_{1}\Delta^{2k_{1} - 1}\cdots 
    k_{q-1}\Delta^{2k_{q-1} - 1}(k_{q}-i)\Delta^{2(k_{q}-i) - 1}  \\
  &\leq k_{1}\Delta^{2k_{1} - 1}\cdots k_{q}\Delta^{2k_{q} - 1}  \\
  &\leq w(d) \Delta^{2t}\enspace.
  \end{align*}
  Therefore
$$|\mathcal{R}_t| < n \cdot \Delta^{2t} \cdot \sum_{d \in \DD_{t} }w(d)\enspace.$$

  \begin{claim}
    \label{TheClaim}
$$    \sum_{d \in \DD_{t} } w(d) = o(d^{t})\enspace.$$
  \end{claim}
  \begin{proof}
    Let $\Dp$ be the set of words on the alphabet $\{0,1,2\}$ that
    \begin{itemize}
    \item do not contain substrings 21 and 02,
    \item in every nonempty prefix the number of nonzero elements 
    is strictly less than the number of zeroes, and
    \item the number of ones and twos in the whole word is one less
      than the number of zeroes.
    \end{itemize}
    Let $\gamma$ be the function that, given a word in $\Dp$, replaces
    each 2 by 1 and appends 1. Observe that the image of $\gamma$ is
    a Dyck word. Let $d$ be a realisable Dyck word of length
    $2t$. Then for every proper, nonempty prefix of $d$, the number of ones
    is strictly less than the number of zeroes. In particular, $d$
    belongs to the image of $\gamma$. We are interested in the size of
    the preimage of $d$. Suppose that $d$ has the form
    $0^{l_1}1^{k_1}0^{l_2}1^{k_2}\dots0^{l_q} 1^{k_q}1$, for some
    positive integers $q,l_1, \ldots, l_q, k_1, \ldots, k_q$. By the definition
    of $\gamma$, the elements of the preimage of $d$ are exactly the
    words of the form $0^{l_1}1^{a_1}2^{b_1}0^{l_2}1^{a_2}2^{b_2}
    \dots 0^{l_q}1^{a_q}2^{b_q}$ where for every $i\in [1,q]$ we have
    $a_i+b_i=k_i$ and $a_i>0$ and $b_i \geq 0$. Hence the size of this
    preimage is $k_1 k_2 \ldots k_q$, which equals $w(d)
    \Delta^q$. Moreover, every element of the preimage of $d$ has $t$
    zeroes and exactly $q$ substrings $01$. Let $F_{t,q}$ be the number
    of words from $\Dp$ with exactly $t$ zeroes and exactly $q$
    substrings $01$. It follows from the above observations that
$$\sum_{d \in \DD_{t} } w(d) \leq \sum_{q=0}^\infty F_{t,q}
\Delta^{-q}\enspace.$$ Define the formal power series
$$F(z,y):= \sum_{t=0}^\infty \sum_{q=0}^\infty F_{t,q} z^t y^q
\enspace.$$ 
Then
$$B(z):= F(z ,\Delta^{-1})= \sum_{t=0}^\infty z^t
\left(\sum_{q=0}^\infty F_{t,q} \Delta^{-q} \right) \enspace.$$ 
Recall that $[z^t] B(z)$ is the coefficient of $z^t$ in $B(z)$. Hence
$$\sum_{d \in \DD_{t} } w(d) \leq [z^t] B(z) \enspace.$$

We now derive a functional equation defining $F(z,y)$ by decomposing
elements of $\Dp$ recursively along the last sequence of nonzero
letters.  For $(d_1, \ldots, d_{2t-1})\in \Dp$ we say that
\emph{position $j$ visits level $k$} if the number of zeroes in $(d_1,
\ldots, d_j)$ exceeds the number of nonzero symbols by $k$.  
The sequence $(0)$ is the unique sequence 
in $\Dp$ that contains only one zero.  Consider some other
sequence $d=(d_1, \ldots, d_{2t-1})$ from $\Dp$ with $t$ zeroes that
ends with exactly $p>0$ nonzero symbols. Define $\delta_1$ to be the substring of $d$ 
starting at $d_1$ and ending at the last position that visits level $1$. 
For $i=2,3,\dots,p$, define  $\delta_i$ to be the substring of $d$
starting at the position immediately after the last position that visits level $i-1$ and ending at the last position that visits level $i$.
In this way, $d$ is uniquely decomposed into $p$ sequences $\delta_1, \ldots, \delta_p \in \Dp$, of total 
length $2t-p-2$, and the remaining sequence of the form $01^a2^b$ with
$a+b=p$ and $a>0$ and $b\geq 0$. 

Let $\SEQ(D')$ denote the set 
of finite sequences of sequences from $D'$. 
Let $\SEQ_{\geq 1}(D')$ denote the set 
of nonempty finite sequences of sequences from $D'$.  
Let 
$$\Dpp:= \{(0)\} \times \SEQ_{\geq 1}(D') \times \SEQ(D')\enspace.$$ 
Let $h$ be the function that maps a sequence 
$d=(d_1, \ldots, d_{2t-1}) \in \Dp \setminus \{(0)\}$ to the triple
$((0), (\delta_1,\ldots, \delta_a), (\delta_{a+1},
\ldots, \delta_{a+b}))$, where $a, b$, and the $\delta_{i}$'s are defined 
as above. 
Observe that $h$ is a bijection between $D'\setminus
\{(0)\}$ and $\Dpp$. 

Let $C_{t,q}$ be the number of elements of $\Dpp$ with $t$ zeroes and
$q$ substrings 01. Define the formal power series 
$$C(z,y):= \sum_{t=0}^{\infty}\sum_{q=0}^\infty C_{t,q} z^t y^q\enspace.$$ 
Observe that $d$ and $h(d)$ have the same number of zeroes, 
for every $d\in D'\setminus \{(0)\}$. 
(Indeed, this is the reason for the leading $(0)$ in the definition of $\Dpp$.) 
Also, the total number of occurrences of 01 substrings in $h(d)$ is one less 
than in $d$. Thus $F_{t,q} = C_{t,q-1}$ for every $t \geq 1$, and 
$F(z,y) - z =y \cdot C(z,y)$. On the other
hand, it follows from the definition of $\Dpp$ that
$$C(z,y)= z \left(\sum_{i \geq 1}  F(z,y)^i \right) \left(\sum_{i \geq 0}  F(z,y)^i \right)
= z \left( \frac{F(z,y)}{1-F(z,y)} \right) \left(
  \frac{1}{1-F(z,y)} \right)\enspace.$$
This justifies that $F(z,y)$ satisfies the following equation:
$$F(z,y)= z+ z y \frac{F(z,y)}{(1-F(z,y))^2}\enspace.$$
In particular,
$$B(z)= z \left(1 + \Delta^{-1} \frac{B(z)}{(1-B(z) )^2}\right)\enspace.$$

Let $\phi(u)= 1+ \Delta^{-1} u / (1-u)^2$. Then $B(z)$ is the formal solution of the  equation $B(z)= z \phi(B(z))$.  It is straightforward to check that $\phi(u)$ satisfies the assumptions of Corollary \ref{cor:inverse_asymptotics} below. The radius of convergence of $\phi(u)$ is 1. Choose $\tau_0= 1-\Delta^{-1/3}$. Then 
$\tau_0 \frac{\phi'(\tau_0)}{\phi(\tau_0)}\neq 1$ and $\frac{\phi(\tau_0)}{\tau_0} = d$. 
Hence, Corollary \ref{cor:inverse_asymptotics} below implies that
$[z^t] B(z) = o(d^t)$. 
This completes the proof of the claim.
\end{proof} 

Returning to the proof of Theorem~\ref{thm:first_theorem_made_precise}, 
Claim~\ref{TheClaim} implies $|\mathcal{R}_t| = o( d^{t}  \Delta^{2t})$. Thus, if $t$ is large enough, then $|\mathcal{A}_t|$ is strictly
smaller than $\ell^t$, implying that there is at least one vector
$(c_1, \ldots, c_t)$ among the $\ell^t$ vectors in $[1,\ell]^t$ on
which the algorithm succeeds. Therefore $G$ admits a nonrepetitive
$L$-colouring.
\end{proof} 

The following results of Flajolet and Sedgewick~\cite{FS09} were used above.

\begin{theorem}[Proposition IV.5 from \cite{FS09}]
Let $\phi$ be a function analytic at 0, having nonnegative Taylor coefficients, and such that $\phi(0)\neq 0$. Let $R\leq +\infty$ be the radius of convergence of the series representing $\phi$ at 0. Under the condition, $$\lim_{x \to R^-} \frac{x \phi'(x)}{\phi(x)} > 1$$ there exists a unique solution $\tau \in (0,R)$ of the characteristic equation $\frac{\tau \phi'(\tau)}{\phi(\tau)}=1$. Then, the formal solution $y(z)$ of the equation $y(z)= z \phi(y(z))$ is analytic at 0 and has radius of convergence $\rho=\frac{\tau}{\phi(\tau)}$. 
\end{theorem}

For a function $\phi$ with nonnegative coefficients, the function $\frac{\tau}{\phi(\tau)}$ is concave in $(0,R)$. Then $\tau$ is the point in the interval $(0,R)$ that maximizes $\frac{\tau}{\phi(\tau)}$.  Thus, for any $\tau_0 \in (0,R)$ for which $\frac{\tau_0 \phi'(\tau_0)}{\phi(\tau_0)}\neq 1$, we have $\rho > \frac{\tau_0}{\phi(\tau_0)}$ which implies that $[z^n] y(z)= o(( \frac{\tau_0}{\phi(\tau_0)})^{-n})$.

\begin{corollary}
\label{cor:inverse_asymptotics}
Let $\phi$ be a function analytic at 0, having nonnegative Taylor coefficients, and such that $\phi(0)\neq 0$. Let $R\leq +\infty$ be the radius of convergence of $\phi$. Assume that  $\phi(R^-)= +\infty$ and let $y(z)$ be the formal solution of the equation  $y(z)= z \phi(y(z))$. Then for any $\tau_0 \in (0,R)$ for which $\frac{\tau_0 \phi'(\tau_0)}{\phi(\tau_0)}\neq 1$,
$$[z^n] y(z)= o\Big(\Big( \frac{\phi(\tau_0)}{\tau_0}\Big)^{n}\Big)\enspace.$$
\end{corollary}

\section{Subdivision Proof}
 
We now begin the proof of Theorem~\ref{thm:Subdivision}.  A sequence
$(s_{1}, \dots, s_{q})$ of positive integers is \DEF{$c$-spread} if
each entry equal to $1$ can be mapped to an entry greater than $1$
such that for each $i\in [1, q]$ with $s_{i} \geq 2$, there are
at least $\lceil c \log s_{i} \rceil$ entries, either all immediately
before $s_{i}$ or all immediately after $s_{i}$, that are equal to $1$
and are mapped to $s_{i}$.

\begin{lemma}
  \label{lem:spread}
  Fix $\eps>0$. Let $w :=(1+\eps)^{-1/2}<1$.  Let $c \in \N$ be such
  that $2^{2/c} \leq 1 + \eps$ and $w^{c} \leq \frac{\eps}{2}(1-w)$.
  Then for each $q \geq 1$ the number of distinct $c$-spread sequences
  of length $q$ is at most $(1+\eps)^q$.
\end{lemma}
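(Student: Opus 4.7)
My plan is to bound the number of $c$-spread sequences of length $q$ by the count of a canonical atomic decomposition, and then prove the bound $(1+\eps)^q$ by induction on $q$, using both hypotheses on $c$ to control the key sum.

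Introduce two kinds of \emph{atoms}: a \emph{lone $1$}, of length $1$, and, for every integer $s\geq 2$ and each side in $\{L,R\}$, a \emph{big block} consisting of the entry $s$ together with exactly $k_s:=\lceil c\log s\rceil$ consecutive $1$s placed immediately on that side of $s$, of total length $k_s+1$. Given a $c$-spread sequence of length $q$ together with a valid mapping, for each big entry $s_i$ pick the $k_{s_i}$ ones mapped to $s_i$ that form the required adjacent block and declare the remaining $1$s lone; this exhibits the sequence as a concatenation of atoms, so the number of $c$-spread sequences of length $q$ is at most the number $D_q$ of atomic concatenations of total length $q$. Considering the last atom gives the recurrence
\[
D_q \leq D_{q-1} + 2\sum_{s\geq 2,\,k_s+1\leq q}D_{q-k_s-1},
\]
with $D_0:=1$, the factor $2$ accounting for the two possible sides.

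I would then prove $D_q\leq(1+\eps)^q$ by induction on $q$. The base $q=0$ is immediate, and for $q\geq 1$, assuming the bound for all smaller indices and extending the sum to all $s\geq 2$ yields
\[
D_q \leq (1+\eps)^{q-1}\Bigl(1+2\sum_{s\geq 2}(1+\eps)^{-k_s}\Bigr),
\]
so it suffices to show $\sum_{s\geq 2}(1+\eps)^{-k_s}\leq\eps/2$. Grouping $s\geq 2$ by the value of $k_s$: the equality $k_s=k$ is equivalent to $2^{(k-1)/c}<s\leq 2^{k/c}$, so there are at most $2^{k/c}$ such integers, and $k$ runs over $k\geq c$ because $s\geq 2$ forces $k_s\geq c$. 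The hypothesis $2^{2/c}\leq 1+\eps$ gives $2^{k/c}\leq(1+\eps)^{k/2}=w^{-k}$, so each summand is at most $w^{-k}(1+\eps)^{-k}=w^{k}$. Summing the resulting geometric series and invoking $w^c\leq\frac{\eps}{2}(1-w)$ gives
\[
\sum_{s\geq 2}(1+\eps)^{-k_s} \leq \sum_{k\geq c}w^k = \frac{w^c}{1-w} \leq \frac{\eps}{2},
\]
which closes the induction.

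The main obstacle is that a naive estimate $\sum_{s\geq 2}s^{-\delta}$ (where $\delta=c\log_2(1+\eps)\geq 2$ follows from the first hypothesis alone) is too weak when $\eps$ is small. The cleaner approach is to index the sum by the integer $k_s$ rather than $s$, turning it into a geometric series in $w$; the second hypothesis $w^c\leq\frac{\eps}{2}(1-w)$ then provides exactly the quantitative slack needed to push $w^c/(1-w)$ below $\eps/2$.
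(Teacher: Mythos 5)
Your proof is correct and takes essentially the same route as the paper: the paper's ``three constructions'' (peeling off, from the front, either a lone $1$ or a block $1^{z}r$ / $r1^{z}$ with $z=\lceil c\log r\rceil$) are exactly your atoms, and the paper also derives the recurrence $f(q)\leq f(q-1)+2\sum_{z\geq c}2^{z/c}f(q-z-1)$, bounds the count of $r$'s with a given $z$ by $2^{z/c}$, and uses the two hypotheses in the same way to sum the resulting geometric series $\sum_{z\geq c}w^{z}=w^{c}/(1-w)\leq\eps/2$. The only cosmetic differences are that you decompose from the back rather than the front and make the injection into atomic concatenations explicit rather than asserting that every $c$-spread sequence arises from the three constructions.
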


\begin{proof}
  The proof is by induction on $q$.  Let $f(q)$ be the number of
  $c$-spread sequences of length $q$.  The claim holds when $q\leq c$
  since the sequence $(1, \dots, 1)$ of length $q$ is the only $c$-spread
  sequence of length $q$ in that case.

  Now assume that $q \geq c + 1$. Here are three ways of obtaining
  $c$-spread sequences of length $q$ from shorter ones:
  \begin{enumerate}
  \item If $(s_{1}, \dots, s_{q-1})$ is $c$-spread then so is $(1,
    s_{1}, \dots, s_{q-1})$.
  \item If $r \in \N$ such that $r \geq 2$ and $\lceil c \log r \rceil
    = q - 1$ then the two sequences $(1, \dots, 1, r)$ and
    $(r, 1, \dots, 1)$ of length $q$ are $c$-spread.
  \item If $r \in \N$ such that $r \geq 2$ and $z:=\lceil c \log r
    \rceil \leq q - 2$, and if $(s_{1}, \dots, s_{q - z -1})$ is a
    $c$-spread sequence, then the two sequences $(1, \dots,
    1, r, s_{1}, \dots, s_{q - z -1})$ and $(r, 1, \dots, 1, s_{1},
    \dots, s_{q - z -1})$ of length $q$ are $c$-spread.
  \end{enumerate}
  It is not difficult to see that each $c$-spread sequence of length
  $q$ can be obtained using the three constructions above.  Notice
  that if $z, r \in \N$ are such that $r \geq 2$ and $z=\lceil c \log
  r \rceil$, then in particular $z \geq c$ and $r \leq 2^{z/c}$.
  Letting $f(0) := 1$, we deduce that
  \begin{align*}
    f(q) &\leq f(q-1) + 2\sum_{z = c}^{q-1} 2^{z/c} f(q - z - 1) \\
    &\leq (1+\eps)^{q-1} + 2 \sum_{z = c}^{q-1} (1+\eps)^{z/2} (1 + \eps)^{q-z-1} \\
    &= (1+\eps)^{q-1} + 2(1+\eps)^{q-1}\sum_{z = c}^{q-1} w^{z} \\
    &\leq  (1+\eps)^{q-1} + 2(1+\eps)^{q-1}\sum_{z = c}^{\infty} w^{z} \\
    &=  (1+\eps)^{q-1} + 2(1+\eps)^{q-1} \frac{w^{c}}{1-w} \\
    &\leq (1+\eps)^{q-1} + \eps(1+\eps)^{q-1} \\
    &= (1 + \eps)^{q}.\qedhere
  \end{align*}
\end{proof}
  
A Dyck word $d$ is \DEF{special} if $d$ does not contain $0110110$ as a substring. 
The following crude upper bound on the number of such words   
will be used in our proof of Theorem~\ref{thm:Subdivision}.

\begin{lemma}
\label{lem:special_Dyck_words}
The number of special Dyck words of length $2t$ is at most $3.992^{t + 1}$. 
\end{lemma}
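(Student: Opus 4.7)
My plan is to reduce to counting binary strings of length $2t$ that avoid the substring $P := 0110110$, and then invoke the standard Guibas--Odlyzko generating function machinery for pattern avoidance. Every special Dyck word of length $2t$ is in particular a length-$2t$ binary string avoiding $P$, so it will suffice to bound the number $a_n$ of length-$n$ binary strings avoiding $P$ and establish $a_{2t} \leq 3.992^{t+1}$.

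I would next apply the Guibas--Odlyzko formula
$$A(z) \;:=\; \sum_{n \geq 0} a_n z^n \;=\; \frac{c(z)}{z^{7} + (1-2z)\,c(z)},$$
where $c(z)$ is the autocorrelation polynomial of $P$. The nontrivial self-overlaps of $P = 0110110$ occur at shifts $3$ and $6$ (via the overlaps $0110$ and $0$ respectively), so $c(z) = 1 + z^3 + z^6$, and the denominator simplifies to
$$g(z) \;:=\; 1 - 2z + z^3 - 2z^4 + z^6 - z^7 \;=\; (1-2z)(1+z^3) + z^6(1-z).$$
The exponential growth rate of $a_n$ then equals $1/\rho$, where $\rho \in (0,1)$ is the smallest positive zero of $g$. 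The factorisation exhibits $g(z) > 0$ for $z \in (0, 1/2]$, so $\rho > 1/2$. At $z_0 := 1/\sqrt{3.992} \approx 0.50050$ a direct numerical check will give $g(z_0) > 0$: the first summand is mildly negative (of order $-1.1 \times 10^{-3}$) but is outweighed by $z_{0}^{6}(1-z_{0}) \approx 7.9 \times 10^{-3}$. This forces $\rho > z_0$ and hence $1/\rho < \sqrt{3.992}$.

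A partial-fraction decomposition of the rational function $A(z)$ then yields $a_n \leq K (1/\rho)^n$ for some explicit constant $K$ (depending only on the roots of $g$), so $a_{2t} \leq K \cdot 3.992^{t}$. The extra multiplicative factor in the target bound ($3.992^{t+1}$ rather than $3.992^{t}$) absorbs $K$ once $t$ exceeds a small threshold; the finitely many smaller values of $t$ are handled by the trivial estimate $a_{2t} \leq 4^{t}$ together with direct enumeration. The hard part will be the singularity estimate, because $\rho$ sits extremely close to $1/2$ and the two summands of $g(z_0)$ nearly cancel, so the numerical bookkeeping has to be done carefully. A cleaner alternative I would fall back to is to read off the linear recurrence $a_n = 2 a_{n-1} - a_{n-3} + 2 a_{n-4} - a_{n-6} + a_{n-7}$ directly from $g(z)A(z) = c(z)$ and prove the bound by strong induction with the ansatz $a_n \leq C (\sqrt{3.992})^n$, checking that the recurrence is preserved for a suitable constant $C$.
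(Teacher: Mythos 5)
The paper's own proof is a one-paragraph elementary counting argument, quite unlike yours. It observes that for $k\geq 8$, a binary word of length $k$ avoiding $0110110$ cannot begin with $0110110$, so mapping it to the pair (first $7$ characters, remaining suffix of length $k-7$) is an injection into a set of size $127\cdot g(k-7)$; here $g(k)$ denotes the number of binary words of length $k$ avoiding the pattern. Together with the trivial base case $g(k)\leq 2^{k}\leq\gamma^{k+1}$ for $k\in[1,7]$, where $\gamma:=127^{1/7}$, this yields $g(k)\leq\gamma^{k+1}$ for all $k$ by induction, and one finishes by checking $\gamma^{2t+1}<3.992^{t+1}$ (which holds because $\gamma^{2}=127^{2/7}<3.992$ and $\gamma<3.992$). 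No generating functions or asymptotics are needed; the bound $\gamma=127^{1/7}\approx 1.998$ is slightly weaker than the true growth rate but comfortably suffices.

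Your route via Guibas--Odlyzko is genuinely different and, in principle, gives a sharper constant, but as written it has real gaps. First, the step ``$g(z_{0})>0$, hence $\rho>z_{0}$'' is not valid as stated: positivity at $z_{0}$ does not preclude $g$ having a zero in $(1/2,z_{0})$. This is fixable (the two summands of your factorisation are monotone on $[1/2,z_{0}]$, so $g\geq (1-2z_{0})(1+z_{0}^{3})+2^{-7}>0$ there), but you need to say it. Second, you never pin down $K$ or the threshold in $t$ after which $K\cdot\rho^{-2t}\leq 3.992^{t+1}$ holds, and since $\rho^{-2}$ and $3.992$ are close, that threshold is not automatically small; the ``trivial estimate plus finite enumeration'' cleanup therefore isn't self-contained. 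Third, the fallback induction does not close in the naive form: the recurrence $a_{n}=2a_{n-1}-a_{n-3}+2a_{n-4}-a_{n-6}+a_{n-7}$ has negative coefficients, and replacing $-a_{n-3}$, $-a_{n-6}$ by $0$ in the upper bound yields $a_{n}\leq 2a_{n-1}+2a_{n-4}+a_{n-7}$; with $\mu=\sqrt{3.992}$ one has $2/\mu>1$, so the induction hypothesis $a_{j}\leq C\mu^{j}$ does not propagate. You would need matching lower bounds on $a_{n-3}$ and $a_{n-6}$, or a more refined ansatz, to make this work. None of these issues is fatal, but they make your argument substantially heavier than the paper's, which gets by with essentially one line.
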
 

\begin{proof}
For $q \geq 1$, let $g(q)$ be the number of binary words of length $q$ not containing $0110110$. Let $\xi := (2^{7} - 1)^{1/7}$. Then $g(q) \leq 2^{q} \leq \xi^{q+1}$ for $q \in [1, 7]$, and $g(q) \leq \xi^{7} \cdot g(q - 7)$ for $q \geq 8$, since  such binary words cannot start with $0110110$. Thus $g(q) \leq \xi^{q+1}$ for all $q \geq 1$. Since every special Dyck word of length $2t$ is a binary word not containing $0110110$, it follows that the number of such Dyck words is at most $\xi^{2t+1} < 3.992^{t + 1}$.
\end{proof}
 
\noindent\textbf{Theorem~\ref{thm:Subdivision}.} {\em Let $G$ be a
subdivision of a graph $H$, such that each edge $vw\in E(H)$ is
subdivided at least
$\ceil{10^{5}\log(\deg(v)+1)}+\ceil{10^{5}\log(\deg(w)+1)}+2$ times in $G$.
Then $\tc(G)\leq 5$.}

\begin{proof}
  Let $n:= |V(G)|$.  Let $L'(v)$ denote a list of available colours
  for each vertex $v \in V(G)$, and assume all these lists have size
  $5$.  Let $\psi$ be an arbitrary precolouring of $G$ with
  precoloured set $Q:= V(H)$ and with $\psi(v)\in L'(v)$ for each $v\in Q$.   
  Fix an ordering of $V(G)$ such that
  $V(G)-Q$ precedes $Q$.

  Let $c:= 10^{5}$.  For each $v \in Q$, let
  $g(v):=\ceil{c\log(\deg(v)+1)}$ and let $M(v)$ be the set of
  vertices of $G$ at distance at most $g(v)+1$ from $v$. Thus
  $M(v)\cap M(w)=\varnothing$ for distinct vertices $v, w \in Q$; we
  say that $u\in V(G)-Q$ \DEF{belongs to} $v$ if $u \in M(v)$ and $v\in Q$. 

  For each edge $vw \in E(H)$, let $P_{vw}$ denote the path of $G$
  induced by the subdivision vertices introduced on the edge $vw$ in
  $G$.  Note that $v, w \notin V(P_{vw})$.  A set $X \subseteq V(G) -
  Q$ is \DEF{nice} if $X\neq \varnothing$ and, for each edge $vw \in
  E(H)$, the graph $P_{vw} - X$ is either connected or empty.  The
  \DEF{boundary $\partial(X)$} of a nice set $X$ is the set of
  vertices $y \in X$ such that $X-\{y\}$ is either nice or
  empty. Observe that $\partial(X)$ is always nonempty.

  Fix an arbitrary ordering of the edges in $E(H)$. 
  For each edge $vw \in E(H)$, 
  orient the path $P_{vw}$ from an arbitrarily chosen endpoint to the other. 
  If $Y$ is a set of consecutive vertices of a path $P_{vw}$ and 
  $x \in V(P_{vw}) - Y$, then $x$ is either
  \DEF{before} $Y$ or \DEF{after} $Y$, 
  depending on the orientation of $P_{vw}$. 
 
  Let $f$ be a priority function defined as follows: For
  every nice set $X$, let $vw$ be the first edge in the ordering
  of $E(H)$ such that $V(P_{vw}) \cap X \neq \emptyset$. If $V(P_{vw}) \subseteq X$, 
  then $V(P_{vw}) \subseteq \partial(X)$, and we let $f(X)$ be an 
  arbitrary vertex in $V(P_{vw})$. If $V(P_{vw}) - X \neq \emptyset$ and
  there is a vertex $x \in \partial(X) \cap V(P_{vw})$ before $V(P_{vw}) - X$ on $P_{vw}$, 
  then $x$ is uniquely determined, and we let $f(X) := x$. 
  If $V(P_{vw}) - X \neq \emptyset$ but there is no such vertex $x$, then we
  let $f(X)$ be the unique vertex in $\partial(X) \cap V(P_{vw})$ that is after 
  $V(P_{vw}) - X$ on $P_{vw}$. 

  For each $u\in V(G) - Q$, let $L(u)$ be the
  list $L'(u) - \{\phi(v)\}$ if $u$ belongs to $v\in Q$ and $\phi(v)
  \in L'(u)$, otherwise let $L(u)$ be obtained from $L'(u)$ by
  removing one arbitrary colour from $L'(u)$.  This defines a list
  $L(u)$ of available colours for each vertex $u \in V(G) - Q$, and
  all these lists have size $\ell := 4$.  Consider
  the algorithm on $G$ with the latter lists, with
  priority function $f$, and with precolouring $\psi$.  By the
  definition of the lists $L(u)$, if the algorithm succeeds on some
  input $(c_{1}, \dots, c_{t}) \in [1,\ell]^{t}$ then it produces a
  nonrepetitive $L'$-colouring of $G$.

  \begin{claim}
    \label{cl:nice}
    Let $t \geq 1$. For each vector $(c_{1}, \dots, c_{t}) \in
    \F_{t}$, all the sets appearing in the trace $\tr(c_{1}, \dots,
    c_{t})$ are nice.
  \end{claim}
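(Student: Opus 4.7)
The plan is to prove the claim by induction on $t$. The base case $t = 1$ is immediate: the unique element of the trace is $X_{1} = V(G) - Q$, and since $V(P_{vw}) \cap Q = \varnothing$ for every edge $vw \in E(H)$, each graph $P_{vw} - X_{1}$ is empty, so $X_{1}$ is nice. For the inductive step I assume $X_{1}, \ldots, X_{t-1}$ are nice and show $X_{t}$ is nice; note that $X_{t}$ is automatically nonempty since iteration $t$ took place (which requires $X \ne \varnothing$ at its beginning). Let $v := f(X_{t-1})$ and let $e^{\star} \in E(H)$ be the unique edge with $v \in V(P_{e^{\star}})$.

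If iteration $t-1$ detected no repetitive path, then $X_{t} = X_{t-1} - \{v\}$. The definition of the priority function $f$ guarantees $v \in \partial(X_{t-1})$ in every case (explicitly in the subcase $V(P_{e^{\star}}) \subseteq X_{t-1}$, and because $v$ is chosen in $\partial(X_{t-1}) \cap V(P_{e^{\star}})$ in the other subcase). Since $\partial(X_{t-1})$ was defined precisely so that removing any of its vertices preserves niceness (or empties $X$), $X_{t}$ is nice.

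If iteration $t-1$ detected a repetitive path $P$ with halves $P_{1}, P_{2}$ and $v \in V(P_{2})$, then $X_{t} = X_{t-1} \cup (V(P_{2}) - Q)$, and the task is to show $V(P_{vw}) - X_{t} = (V(P_{vw}) - X_{t-1}) \setminus V(P_{2})$ is connected or empty for every $vw \in E(H)$. The key structural fact I exploit is that every subdivision vertex has degree $2$ in $G$, with its two neighbors lying inside the same path $P_{vw}$, except for the two extreme subdivision vertices of $P_{vw}$ which each also have one $Q$-neighbor. Combined with the fact that $P_{2}$ is a simple path, this forces $V(P_{2}) \cap V(P_{vw})$ to decompose into at most two contiguous subpaths of $P_{vw}$; moreover, unless $V(P_{2}) \subseteq V(P_{vw})$, each such subpath must either contain an extreme vertex of $P_{vw}$ (with $P_{2}$ exiting through the adjacent $Q$-vertex) or contain an endpoint of $P_{2}$.

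With this in hand I separate the edges. For $vw \ne e^{\star}$, since $v \notin V(P_{vw})$ we have $V(P_{2}) \not\subseteq V(P_{vw})$, and every vertex of $V(P_{2}) \cap V(P_{vw})$ is coloured at the start of iteration $t-1$ and hence lies in the contiguous block $V(P_{vw}) - X_{t-1}$; a short case check, depending on whether the at-most-two intervals of $V(P_{2}) \cap V(P_{vw})$ touch $0$, $1$, or $2$ ends of $P_{vw}$, shows the residue stays contiguous, using the inductive hypothesis that such a block can only reach an end of $P_{vw}$ that an interval of $V(P_{2})$ touches. For $vw = e^{\star}$, if $V(P_{e^{\star}}) \subseteq X_{t-1}$ then $V(P_{e^{\star}}) - X_{t} \subseteq V(P_{e^{\star}}) - X_{t-1} = \varnothing$; otherwise the priority function $f$ places $v$ adjacent to the coloured block $\{p_{i}, \ldots, p_{j}\} = V(P_{e^{\star}}) - X_{t-1}$ on one side of $P_{e^{\star}}$, so the interval of $V(P_{2}) \cap V(P_{e^{\star}})$ containing $v$ starts at $v$ and extends into the block, and any second interval must touch the opposite end of $P_{e^{\star}}$, which forces the coloured block to cover all of $V(P_{e^{\star}})$ except $v$; in every sub-configuration the complement is a contiguous subpath or empty. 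The main obstacle throughout is this Case~$2$ structural analysis, and in particular handling the possibility of two intervals in $V(P_{2}) \cap V(P_{e^{\star}})$; it is the design of the priority function---placing the newly coloured vertex at the boundary of the coloured block---that makes the argument succeed.
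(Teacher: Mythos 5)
Your proof is correct in outline but takes a genuinely different route from the paper. For the case where a repetitive path $P$ is found, the paper argues by contradiction with a minimality argument: if $P_{vw}-X_t$ had two components, pick the two closest vertices $x,y$ in distinct components, let $Z$ be the strip of $P_{vw}$ strictly between them, and show $Z\subseteq X_t\setminus X_{t-1}\subseteq V(P_2)-Q$; since $P_2$ is connected and avoids $x,y$, and every vertex of $Z$ has both neighbours on $P_{vw}$, one concludes $Z=V(P_2)$, which contradicts $v_{t-1}\in\partial(X_{t-1})\cap V(P_2)$ combined with $\partial(X_{t-1})\cap Z=\varnothing$. Your proof instead does a direct, forward analysis of how $V(P_2)$ can intersect each $P_{vw}$ (at most two intervals, each abutting an extreme end of $P_{vw}$ unless $V(P_2)\subseteq V(P_{vw})$), and a case check per edge. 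Both work; the paper's is noticeably shorter because it isolates a single minimal ``bad strip'' rather than enumerating the interval configurations.

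One phrasing in your argument should be tightened. You invoke ``the inductive hypothesis that such a block can only reach an end of $P_{vw}$ that an interval of $V(P_2)$ touches,'' but the inductive hypothesis says only that $V(P_{vw})-X_{t-1}$ is contiguous; it says nothing about which ends of $P_{vw}$ it reaches. What actually closes the argument for $vw\neq e^\star$ is the conjunction of two observations you essentially have: (i) every maximal interval of $V(P_2)\cap V(P_{vw})$ reaches an extreme subdivision vertex of $P_{vw}$ (because interior vertices of $P_{vw}$ have degree $2$ with both neighbours on $P_{vw}$, so $P_2$ can only enter or leave $P_{vw}$ through its two ends, and $V(P_2)\not\subseteq V(P_{vw})$ here); and (ii) every such interval is contained in the contiguous coloured block $V(P_{vw})-X_{t-1}$ since all of $V(P_2)\setminus\{v\}$ was coloured before iteration $t-1$. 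Together (i) and (ii) force each interval to sit at an end of the block, so removing them keeps the block contiguous. With this correction the argument goes through, but it is worth noting that the paper's contradiction argument avoids this two-interval bookkeeping entirely.
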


  \begin{proof}
    The proof is by induction on $t$.  The claim is true for $t=1$
    since $X_1=V(G)-Q$ is nice. Now assume that $t \geq 2$.  Let
    $(c_{1}, \dots, c_{t}) \in \F_{t}$ and let $\tr(c_{1}, \dots,
    c_{t}) = (X_{1}, \dots, X_{t})$.  Then $(c_{1}, \dots, c_{t-1})
    \in \F_{t-1}$, and by induction the sets $X_{1}, \dots, X_{t-1}$
    are nice.  Let $v_{t-1} := f(X_{t-1})$.

    First suppose that $R(t-1) = \varnothing$. Then $X_{t} = X_{t-1} -
    \{v_{t-1}\}$, which is a nice set since
    $v_{t-1}\in \partial(X_{t-1})$ and $X_{t} \neq \varnothing$.

    Now assume that $R(t-1) = \lambda$ for some $\lambda \in \S$.  Using
    Lemma~\ref{lem:P}, let $P$ be the path of $G$ determined by $\lambda$,
    $X_{t-1}$, and $v_{t-1}$.  Let $P_{1}$ and $P_{2}$ denote the two
    halves of $P$, so that $v_{t-1} \in V(P_{2})$.  Then $X_{t} =
    X_{t-1} \cup (V(P_{2}) - Q)$.  Arguing by contradiction, suppose
    that $X_{t}$ is not nice. Then there exists $vw \in E(H)$ such
    that $P_{vw} - X_{t}$ has at least two components.  Let $x, y$ be
    two vertices in distinct components of $P_{vw} - X_{t}$ that are
    as close as possible on the path $P_{vw}$. Then the set $Z$ of
    vertices strictly between $x$ and $y$ on $P_{vw}$ is a subset of
    $X_{t}$. On the other hand, $Z \cap X_{t-1} = \varnothing$ since
    otherwise $x$ and $y$ would be in distinct components of $P_{vw} -
    X_{t-1}$, contradicting the fact that $X_{t-1}$ is nice.  Thus $Z
    \subseteq V(P_{2}) - Q$, and also $\partial(X_{t-1}) \cap Z =
    \varnothing$. Since $P_{2}$ is connected and avoids $x$ and $y$,
    we deduce that $Z = V(P_{2})$ (and thus $Q \cap V(P_{2}) =
    \varnothing$).  However, $v_{t-1} \in \partial(X_{t-1})$ and
    $v_{t-1} \in V(P_{2})= Z$, contradicting $\partial(X_{t-1}) \cap Z
    = \varnothing$.
  \end{proof}

  \begin{claim}
    \label{cl:sizeSubdiv}
    $\beta \leq 1.001$.
  \end{claim}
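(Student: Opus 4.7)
The plan is to exploit the structure of the subdivision $G$ to show that every realizable sequence $s = (s_1, \ldots, s_{2k}) \in \bar\S$ maps to a $c$-spread sequence with $c = 10^5$, and then apply Lemma~\ref{lem:spread}.

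First I would describe the entries of a realizable sequence. Since $Q$ is precoloured and the algorithm keeps $X \cap Q = \varnothing$, the priority vertex $v_1 = f(X)$ is always a subdivision vertex of degree $2$ in $G$, so $s_1 \in \{1, 2\}$ (depending on whether the second neighbour of $v_1$ lies in $X$). For every $j \in [2, 2k] \setminus \{p\}$: if $v_j$ is a subdivision vertex then $s_j = 1$ (it has only one neighbour outside $\{v_{j-1}\}$), and if $v_j \in Q$ then $s_j \in [2, \deg_G(v_j) - 1]$. Moreover $s_p = -1$ for a unique $p \in [k+1, 2k]$.

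The key structural fact is that subdivision vertices have degree $2$ in $G$, so $P$ can only go from one $Q$-vertex to another by traversing the entire subdivision path of the edge connecting them. Hence if $v_i$ and $v_{i'}$ are $Q$-vertices at consecutive positions $i < i'$ of $s$ lying on the same half (either $[1,p-1]$ or $[p+1,2k]$), then $i' - i - 1 = g(v_i) + g(v_{i'}) + 2$. Since $\lceil c \log s_i \rceil \leq g(v_i) = \lceil c \log(\deg_G(v_i)+1) \rceil$, this gives at least $\lceil c \log s_i \rceil + \lceil c \log s_{i'} \rceil + 2$ intervening $1$-entries, enough room for each $Q$-vertex entry to privately claim $\lceil c \log s_i \rceil$ flanking $1$'s on one side.

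I would then show that the sequence $\tilde s$ of length $2k-1$ obtained by deleting $s_p$ from $s$ is $c$-spread. For each entry $s_i \geq 2$ of $\tilde s$, corresponding to a $Q$-vertex $v_i$, one assigns $\lceil c \log s_i \rceil$ consecutive $1$'s either immediately before or immediately after $s_i$. By the structural fact, the side toward the next $Q$-vertex on the same half of $P$ typically suffices; in the boundary situation where the nearest obstruction on one side is an endpoint of $P$ (so $P$ stops inside $M(v_i)$ without reaching another $Q$-vertex), $P$ must on the opposite side pass through at least $g(v_i)+1 \geq \lceil c \log s_i \rceil$ subdivisions belonging to $v_i$, which supply the required $1$'s. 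The entry $s_1 = 2$ is handled analogously using the $1$'s right after position $1$ in Half~$1$.

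Finally, choose $\varepsilon > 0$ with $(1+\varepsilon)^2 \leq 1.001$, e.g.\ $\varepsilon = 4.99 \times 10^{-4}$. With $c = 10^5$, the hypotheses of Lemma~\ref{lem:spread} hold with enormous slack (both $2^{2/c} \leq 1+\varepsilon$ and $w^c \leq \tfrac{\varepsilon}{2}(1-w)$ are easily verified). The lemma then bounds the number of $c$-spread sequences of length $2k-1$ by $(1+\varepsilon)^{2k-1}$. Accounting for the $k$ positions for $p$ and the $2$ possibilities for $s_1$, together with direct handling of the few small values of $k$ where the generic bound does not close (notably $\alpha_1 = 0$, since a repetitively coloured path has at least $4$ vertices), one obtains $\alpha_k \leq 1.001^k$ for every $k \in [1, \lfloor n/2 \rfloor]$, so $\beta \leq 1.001$. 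The main obstacle is verifying the $c$-spread property uniformly in the boundary cases where a high-degree $Q$-vertex lies close to $v_1$ or to $v_p$ on $P$, which requires careful use of the opposite-side argument and of the abundance of $v_i$-belonging subdivisions guaranteed by the subdivision hypothesis.
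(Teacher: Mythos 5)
Your plan matches the paper's at a high level --- encode realisable sequences as $c$-spread sequences of length $2k-1$ and invoke Lemma~\ref{lem:spread} --- but the proof of the $c$-spread property has a genuine gap, and it is exactly the step where the paper does the real work. Your argument for $c$-spreadness is purely structural: you use only that subdivision vertices have degree $2$, so $P$ traverses entire subdivision paths between consecutive $Q$-vertices, yielding at least $g(v_i)+g(v_{i'})+2$ intervening $1$-entries (this should be a $\geq$, not your $=$, since the subdivision hypothesis is a lower bound). But in the ``boundary situation'' you assert with no justification that if $P$ stops inside $M(v_i)$ on one side without reaching another $Q$-vertex, then ``$P$ must on the opposite side pass through at least $g(v_i)+1$ subdivisions belonging to $v_i$''. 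This is false on structural grounds alone: nothing about $G$ prevents $P$ from lying entirely inside $M(u)$, ending within at most $g(u)$ steps of $u$ in \emph{both} directions, in which case neither side supplies $\lceil c\log s_i\rceil$ flanking ones.

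The ingredient you are missing is the one the paper uses to rule this out: the triple $(P,X,v)$ arises because $P$ was \emph{repetitively coloured} at that stage of the algorithm, and by the construction of the lists $L$ the colour $\psi(u)$ has been removed from the list of every non-$Q$ vertex in $M(u)$. Hence $\psi(u)$ occurs on $V(P)\cap M(u)$ only at $u$, yet repetitiveness puts a second occurrence of each colour at distance $k$ along $P$; this is what forces $P$ out of $M(u)$ in a controlled direction (the paper's ``$x$ is not in $M(u)$''), so that walking $g(u)+1$ steps from $u$ towards $x$ stays inside $P$ and inside $M(u)$. The paper's six bullet sub-cases are bookkeeping for where that walk lands relative to the $v_i/w_j$ indexing and the position $p$ of the $-1$ entry --- a point you also underestimate, since your $\tilde s$ (which merely drops $s_p$) joins the two halves at the far endpoint of $P$ rather than at $v$, so ``consecutive ones'' across the join are not consecutive on $P$ and the belonging-to-$v_i$ argument does not directly transfer. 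Finally, note the same repetitiveness-plus-lists argument is what makes the paper's Case~1 work (concluding $P\cap Q=\varnothing$ when $2k\leq c+1$, hence $\alpha_k\leq 1$); without it, ``direct handling of a few small values of $k$'' would have to cover every $k$ up to roughly $c/2=5\times 10^4$, which your generic bound of order $k(1+\varepsilon)^{2k}$ does not close.
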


  \begin{proof}
    We need to show that $\alpha_{k} \leq 1.001^{k}$ for each
    $k \in [1,\floor{\tfrac{n}{2}}]$.  Fix $k \in [1,\floor{\tfrac{n}{2}}]$. Let
    $\W$ be the set of triples $(P, X, v)$ that may be
    considered by the algorithm in the uncolouring step, over all $t
    \geq 1$ and vectors $(c_{1}, \dots, c_{t}) \in \F_{t}$, such that
    $P$ has exactly $2k$ vertices.

    Observe that if $(P, X, v) \in \W$, then $X$ is a nice subset of
    $V(G) - (Q \cup V(P))$ by Claim~\ref{cl:nice}; also, $v\in V(P)$
    and $X \cup \{v\}$ is again a nice set.  By the definition of
    $\W$, every realisable sequence $\lambda \in \S$ of length $2k$ is `produced'
    by at least one triple in $\W$, in the sense that there exists
    $(P, X, v) \in \W$ such that $\lambda = \lambda(P, X, v)$.  We may assume that
    $\W$ is not empty, since otherwise $\alpha_{k}=0$, and we are
    trivially done.
 
    Let $(P, X, v) \in \W$ and let $\lambda(P, X, v) = (\lambda_{1},\dots,\lambda_{2k})$.  Let $v_{1}, \dots, v_{2k}$ be the vertices of $P$.
    Note that $P$ may contain vertices of $Q$.  Since $v$ is not in $Q$, it has degree $2$, and thus $\lambda_{1} \in \{1,2\}$.  (Note that
    we could have $\lambda_{1}=2$ if the neighbour of $v$ in $P$ is in $Q$.) We have $\lambda_{p}=-1$ for a unique $p \in [2, 2k]$. We claim that the
    sequence $\lambda':=(\lambda_{p-1},\dots,\lambda_{2},1,\lambda_{p+1},\dots,\lambda_{2k})$ obtained from $(\lambda_{1}, \dots, \lambda_{2k})$ by removing the $p$-th entry, reversing the $(\lambda_{1},\dots,\lambda_{p-1})$ prefix, and replacing     $\lambda_{1}$ by $1$, is $c$-spread.

    \textbf{Case 1.} $2k \leq c + 1$: Then $P$ has no vertex $u$ in
    $Q$, since otherwise $P$ would have at least $g(u)+2
    \geq\ceil{c\log 2}+2 > c + 1$ vertices. It follows that there is
    an edge $xy \in E(H)$ such that $P$ is a subpath of $P_{xy}$. Then
    $v$ must be an endpoint of $P$.  Indeed, if not, then the two
    neighbours of $v$ in $P$ are in distinct components of $P_{xy} -
    (X \cup \{v\})$, contradicting the fact that $X \cup \{v\}$ is
    nice. Clearly $\lambda_{i} = 1$ for each $i \in [2, 2k-1]$ and $\lambda_{2k} =
    -1$. If $v$ is an internal vertex of $P_{xy}$, then one of the two
    neighbours of $v$ is in $X$, and it follows that $\lambda_{1} = 1$. If
    $v$ is an endpoint of $P_{xy}$, then $\lambda_{1}$ is the index in the
    set $N(v)$ of the only neighbour $w$ of $v$ that is in $P_{xy}$.
    This index is always $1$ by our choice of the ordering of
    $V(G)$ (since elements of $V(H)$ come last in the order). 
    Hence we again have $\lambda_{1}=1$. Therefore
    $(\lambda_{1},\dots,\lambda_{2k})=(1,\dots,1,-1)$ and $\lambda'$ is the sequence of
    $2k-1$ ones, which is $c$-spread.

    \textbf{Case 2.} $2k \geq c + 2$: If $\lambda_i>1$ for some $i\in[2,p]$,
    then $v_i$ is in $Q$; in this case, our goal is to show that $\lambda'$
    contains $g(v_i)$ ones immediately before or after
    $\lambda_i$ that can be mapped to $\lambda_i$. 
Similarly, if $\lambda_{p+j}>1$ for some $j\in[1,q]$, then $w_j$
    is in $Q$; in this case, our goal is to show that $\lambda'$ contains
    $g(w_j)$ ones immediately before or after $\lambda_{p+j}$ that can be
    mapped to $\lambda_{p+j}$.

    Consider a vertex $u\in V(P)\cap Q$. By the definition of $L$, the
    colour assigned to $u$ is assigned to no vertex that belongs to
    $u$ (those in the set $M(u)$) when the algorithm considers the
    triple $(P,X,v)$.  At that
    stage, $P$ is repetitively coloured. Let $x$ be the unique  vertex at distance $k$
    from $u$ in $P$. Then $u$ and $x$ are assigned the same
    colour, and $x$ is not in $M(u)$.  Walk from $u$
    towards $x$ and stop after $g(u)+1$ steps. This defines a subpath
    $P'$ of $P$ consisting of exactly $g(u)+1$ vertices that belong to
    $u$, either all immediately before $u$ or all immediately after
    $u$ in $P$. Consider the following six possible values of $u$ and
    $P'$:

\begin{itemize}
\item     If $u=v_i$ and $P'=(v_{i+g(u)+1},v_{i+g(u)},\dots,v_{i+1})$ then
    $\lambda_{i+g(u)}=\lambda_{i+g(u)-1}\dots=\lambda_{i+1}=1$ and $\lambda'$ contains $g(u)$
    ones immediately before $\lambda_i$ that can be mapped to $\lambda_i$.

\item     If $u=v_i$ and $P'=(v_{i-1},v_{i-2},\dots,v_{i-g(u)-1})$ and
    $i-g(u)-1\neq 1$, then $\lambda_{i-1}=\lambda_{i-2}=\dots=\lambda_{i-g(u)}=1$ and
    $\lambda'$ contains $g(u)$ ones immediately after $\lambda_i$ that can be
    mapped to $\lambda_i$.

\item     If $u=w_j$ and $P'=(w_{j+1},w_{j+2},\dots,w_{j+g(u)+1})$ then
    $\lambda_{p+j+1}=\lambda_{p+j+2}=\dots=\lambda_{p+j+g(u)}=1$ and $\lambda'$ contains
    $g(u)$ ones immediately after $\lambda_{p+j}$ that can be mapped to
    $\lambda_{p+j}$.

\item     If $u=w_j$ and $P'=(w_{j-g(u)-1},w_{j-g(u)},\dots,w_{j-1})$ then
    $\lambda_{p+j-g(u)}=\lambda_{p+j-g(u)+1}=\dots=\lambda_{p+j-1}=1$ and $\lambda'$ contains
    $g(u)$ ones immediately before $\lambda_{p+j}$ that can be mapped to
    $\lambda_{p+j}$.

\item     If $u=v_i$ and
    $P'=(v_{i-1},v_{i-2},\dots,v_1=w_1,w_2,\dots,w_{g(u)-i+3})$ then
    $\lambda_{i-1}=\lambda_{i-2}=\dots=\lambda_2=1$ and
    $\lambda_{p+1}=\lambda_{p+2}=\dots=\lambda_{p+g(u)-i+2}=1$, implying that
    $$\lambda_{i-1},\lambda_{i-2},\dots,\lambda_2,1,\lambda_{p+1},\lambda_{p+2},\dots,\lambda_{p+g(u)-i+1}$$
    is a sequence of $g(u)$ ones immediately after $\lambda_i$ in $\lambda'$ that
    can be mapped to $\lambda_i$.

\item     If $u=w_j$ and
    $P'=(v_{g(u)-j+3},v_{g(u)-j+2},\dots,v_1=w_1,w_2,\dots,w_{j-1})$
    then $\lambda_{g(u)-j+2}=\lambda_{g(u)-j+1}=\dots=\lambda_2=1$ and
    $\lambda_{p+1}=\lambda_{p+2}=\dots=\lambda_{p+j-1}=1$, implying that
    $$\lambda_{g(u)-j+2},\lambda_{g(u)-j+1},\dots,\lambda_2,1,\lambda_{p+1},\lambda_{p+2},\dots,\lambda_{p+j-1}$$
    is a sequence of $g(u)$ ones immediately before $\lambda_{p+j}$ in $\lambda'$
    that can be mapped to $\lambda_{p+j}$.
\end{itemize}

    Hence $\lambda'$ is $c$-spread, as claimed.

    Therefore $(\lambda_{1},\dots,\lambda_{2k})$ is obtained from a $c$-spread
    sequence $\lambda'$ of length $2k-1$ by choosing an index
    $p\in[1,2k-1]$, inserting $-1$ between the $p$-th and $(p+1)$-th
    entries, reversing the prefix of $\lambda'$ up to the $p$-th entry, and
    possibly changing the first entry to a $2$.  Hence the number of
    realisable sequences in $\S$ of length $2k$ is at most $2(2k-1)$ times the number of $c$-spread sequences of length $2k-1$.     Let $\eps := 0.0002$. Then $\eps$ and $c$ satisfy the
    hypotheses of Lemma~\ref{lem:spread}, and we deduce from that
    lemma that
$$
\alpha_{k} \leq (4k-2) \cdot (1+\eps)^{2k-1} \leq 4k (1+\eps)^{2k}
\leq 1.001^{k}\enspace.
$$ 
(The rightmost inequality holds because $2k \geq c$ and $2c
(1+\eps)^{c} \leq 1.001^{c/2}$.)
\end{proof}

Next we show that every realisable Dyck word is special. 
Consider a word $d \in \DD_{t}$ for some $t \geq 1$, and 
let $R\in \RR_{t}$ be a record such that $D(R) = d$. Suppose that
$d$ contains $0110110$ as a subsequence. Then  
there is an index $i \in [1, t-2]$ such that 
$|R(i)| = |R(i+1)| = 4$. 
Fix an arbitrary vector $(c_{1}, \dots, c_{t}) \in \F_{t, R}$, and let $(P,X,v)$ and $(P',X',v')$ 
be the triples such that $\lambda(P,X,v)=R(i)$ and $\lambda(P',X',v')=R(i+1)$, respectively, 
in the execution of the algorithm on input $(c_{1}, \dots, c_{t})$.   
Then $P$ contains no vertex from $Q$, since otherwise $P$ would need to have at least
$c+2 > 4$ vertices, as explained in Case~1 of the proof of Claim~\ref{cl:sizeSubdiv}. 
Since our ordering of $V(G)$ puts vertices in $V(G)-Q$ before those in $Q$, 
and since $X$ is nice, 
it follows that $\lambda(P,X,v)=R(i)=(1,1,1,-1)$.  
By the same argument $\lambda(P',X',v')=R(i+1)=(1,1,1,-1)$. 
Let $v_{1}, \dots, v_{4}$ denote the vertices of $P$, with $v_{4}=v$. 
Then in the $i$-th iteration of the while-loop of the algorithm, immediately after colouring
$v_{4}$, we have $\phi(v_{1})=\phi(v_{3})$ and $\phi(v_{2})=\phi(v_{4})$. 
Vertices $v_{3}$ and $v_{4}$ are subsequently uncoloured. 
Thus $X' = X \cup \{v_{3}\}$. 

By our choice of the priority function $f$, we have $f(X')=v'=v_{3}$. 
Indeed, $f(X)=v_{4}$ and $v_{1}, v_{2} \in V(P_{vw}) - X'$, where $vw\in E(H)$ is 
the edge such that $P \subseteq P_{vw}$. In particular, $v_{3} \in \partial(X')$ 
and $V(P_{vw}) - X' \neq \emptyset$. Thus  
either $v_{4}$ is before $V(P_{vw}) - X$ on $P_{vw}$, in which case $v_{3}$ is 
before $V(P_{vw}) - X'$ on $P_{vw}$, implying $f(X')=v_{3}$; or $v_{4}$ 
is after $V(P_{vw}) - X$ on $P_{vw}$, in which case $v_{3}$ is 
after $V(P_{vw}) - X'$ {\em and} there is no vertex in $\partial(X')$ 
before $V(P_{vw}) - X'$ on $P_{vw}$, implying again $f(X')=v_{3}$. 

It follows that the vertices of $P'$ are $v_{0}, v_{1}, v_{2}, v_{3}$ in order, 
where $v_{0} \in V(P_{vw})$ (and obviously $v_{0}\neq v_{4}$). 
In the $(i+1)$-th iteration of the while-loop, immediately  
after colouring $v_{3}$ (=$v'$), we have $\phi(v_{0}) = \phi(v_{2})$ and 
$\phi(v_{1}) = \phi(v_{3})$. However,  the colours of 
$v_{0}, v_{1}, v_{2}$ have not changed since the beginning of the $i$-th iteration, 
and $\phi(v_{1})=\phi(v_{3})$ at that point. This imples that
$P'$ was already repetitively coloured at the start of the $i$-th iteration, a contradiction. 
Therefore, realisable Dyck words are special, as claimed. 

Let $t \geq 1$, let $m:= \min\{n, t\}$, 
let $i\in [1, m]$ and let $(d_{1}, \dots, d_{2t})$ be a realisable Dyck word of
  length $2t$ such that $d_{2t-j} = 1$ for each $j\in [0, i-1]$. Let 
  $q$ be the number of maximal subsequences of consecutive ones in $(d_{1},
  \dots, d_{2t-i})$. If $q \geq 1$ then let $k_{1}, \dots, k_{q}$ be the
  lengths of these sequences, in order.  If $q \geq 1$, then
  $\sum_{j=1}^{q}k_{j}\leq t -i$, and we deduce that there are at most
  $$\alpha_{k_{1}} \alpha_{k_{2}} \cdots \alpha_{k_{q}} \leq
  \beta^{k_{1}} \beta^{k_{2}} \cdots \beta^{k_{q}} \leq \beta^{t-i}
  \leq \beta^{t}$$ distinct records $R \in \RR_{t}$ with $z(R)=i$ such
  that $D(R)=(d_1,\dots,d_{2t})$.  If $q = 0$, then there is at most
  $1 \leq \beta^{t}$ records $R \in \RR_{t}$ with $z(R)=i$ such that
  $D(R)=(d_1,\dots,d_{2t})$. 
  (Note that here we use the fact that $\beta \geq 1$.)

  Since for each $i\in [1, m]$, there are at most $\beta^{t}$ distinct
  records $R \in \RR_{t}$ with $z(R)=i$ that have the same Dyck word
  $D(R)$, and since there are exactly $|\DD_{t}|$ distinct  realisable special Dyck words of
  length $2t$, it follows that $|\RR_{t}| \leq m|\DD_{t}|\beta^{t} \leq
  n|\DD_{t}|\beta^{t}$.
Using Claim~\ref{cl:sizeSubdiv} and
Lemmas~\ref{lem:lossless} and~\ref{lem:special_Dyck_words}, we obtain 
$$
|\F_{t}| = |\A_{t}| \leq n(\ell+1)^{n}|\DD_{t}| \beta^{t} \leq
n(\ell+1)^{n} 3.992^{t+1} 1.001^{t}
< n(\ell+1)^{n} 3.996^{t+1} \enspace.
$$
Hence, if $t$ is sufficiently large then $|\F_{t}| < \ell^{t}$, implying that 
there is at least one vector $(c_{1},\dots,c_{t})$ among the
$\ell^{t}$ many vectors in $[1,\ell]^{t}$ on which the algorithm
succeeds.  Therefore $G$ admits a nonrepetitive $L'$-colouring.
\end{proof}

Note that we made no effort to optimise the constant $10^{5}$ in the
proof of Theorem~\ref{thm:Subdivision}.

\section{Pathwidth Proofs}

The proof of Theorem~\ref{thm:Pathwidth} depends on the following
lemma of independent interest.

\begin{lemma}
  \label{lem:Blocks}
  Let $B_1,\dots,B_m$ be pairwise disjoint sets of vertices in a graph
  $G$, such that no two vertices in distinct $B_i$ are adjacent. Let
  $H$ be the graph obtained from $G$ by deleting $B_i$ and adding a
  clique on $N_G(B_i)$ for each $i\in[1,m]$. Then
  \begin{align*}
    \pi(G)&\leq\pi(H)+\max_i\pi(G[B_i])\enspace.
  \end{align*}
\end{lemma}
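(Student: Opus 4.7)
The plan is to build a nonrepetitive colouring of $G$ by combining a nonrepetitive colouring $\psi_H$ of $H$ using a palette $C_1$ of size $\pi(H)$ with, for each $i$, a nonrepetitive colouring of $G[B_i]$ drawn from a common palette $C_2$ of size $\max_i\pi(G[B_i])$, where $C_1$ and $C_2$ are chosen disjoint. The resulting colouring $\psi$ of $G$ uses at most $\pi(H)+\max_i\pi(G[B_i])$ colours, and the entire task is to check that $\psi$ is nonrepetitive.

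Assume for contradiction that some even path $P=v_1,\dots,v_{2t}$ in $G$ satisfies $\psi(v_i)=\psi(v_{t+i})$ for every $i\in[1,t]$. Because $V(H)=V(G)\setminus\bigcup_j B_j$ and $C_1\cap C_2=\varnothing$, the disjointness of palettes forces $v_i\in V(H)$ if and only if $v_{t+i}\in V(H)$. Let $u_1,\dots,u_s$ denote the subsequence of vertices of $P$, in the order they appear on $P$, that lie in $V(H)$. The equivalence just stated makes the set of $V(H)$-positions in $P$ periodic with period $t$, so $s$ is even, and whenever $u_j=v_{i_j}$ with $i_j\in[1,t]$ we have $u_{s/2+j}=v_{i_j+t}$.

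The structural fact I need is that $u_1,\dots,u_s$ is a path in $H$. Consecutive $u_j,u_{j+1}$ are either adjacent in $G$, hence adjacent in $H$ since $G$-edges within $V(H)$ survive the deletion of the $B_i$'s; or they are separated on $P$ by a nonempty maximal $B$-block which, because there are no edges between distinct $B_i$'s, lies inside a single $B_i$. In the latter case $u_j,u_{j+1}\in N_G(B_i)$, and they are adjacent in $H$ by the clique we added on $N_G(B_i)$.

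The argument now concludes by case analysis on $s$. If $s=0$, then $P\subseteq\bigcup_i B_i$, and the absence of edges between distinct $B_i$'s forces $P$ into a single $G[B_j]$; the induced colouring of $G[B_j]$ is then repetitively coloured on $P$, contradicting its nonrepetitivity. If $s\geq 2$, then using $u_{s/2+j}=v_{i_j+t}$ we get $\psi_H(u_j)=\psi(v_{i_j})=\psi(v_{i_j+t})=\psi_H(u_{s/2+j})$ for every $j\in[1,s/2]$, so $u_1,\dots,u_s$ is a repetitively coloured even path in $H$ under $\psi_H$, contradicting nonrepetitivity of $\psi_H$ (the borderline case $s=2$ is already excluded because $\psi_H$ is proper). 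The only step requiring care is the structural claim, and as sketched it falls out immediately from the clique completion defining $H$ and the non-adjacency of distinct $B_i$'s.
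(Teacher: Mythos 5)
Your proof is correct and takes essentially the same route as the paper: colour $H$ and the $B_i$'s from disjoint palettes, project the hypothetical repetitively coloured path $P$ onto its $V(H)$-vertices, observe that this projection is a path in $H$ (via the clique completion on $N_G(B_i)$), and derive the contradiction. The only difference is that you make the index-periodicity argument ($v_i\in V(H)$ iff $v_{t+i}\in V(H)$, hence $u_{s/2+j}$ corresponds to $u_j$) explicit, whereas the paper leaves this implicit in the statement that the projected path is repetitively coloured.
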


\begin{proof}
  Nonrepetitively colour $G[B_1\cup\dots\cup B_m]$ with
  $\max_i\pi(G[B_i])$ colours. Nonrepetitively colour $H$ with a
  disjoint set of $\pi(H)$ colours. Suppose on the contrary that $G$
  contains a repetitively coloured path $P$. Let $P'$ be the set of
  vertices in $P$ that are in $H$, ordered according to $P$. Then
  $P'\neq\varnothing$, as otherwise $P$ is contained in some $B_i$,
  implying $B_i$ contains a repetitively coloured path. Consider a
  maximal subpath $S$ in $P$ that is not in $H$. So $S$ was deleted
  from $P$ in the construction of $P'$. Since no two vertices in
  distinct $B_i$ are adjacent, $S$ is contained in a single set
  $B_i$. Thus the vertices in $P$ immediately before and after $S$ (if
  they exist) are in $N_G(B_i)$, and are thus adjacent in $H$. Hence
  $P'$ is a path in $H$. Since the vertices in $B_1\cup\dots\cup B_m$
  receive distinct colours from the vertices in $H$, the path $P'$ is
  repetitively coloured. This contradiction proves that $G$ is
  nonrepetitively coloured.
\end{proof}

The next lemma provides a useful way to think about graphs of bounded
pathwidth. Let $G\cdot K_\width$ denote the \DEF{lexicographical product}
of a graph $G$ and the complete graph $K_\width$. That is, $G\cdot K_\width$ is
obtained by replacing each vertex of $G$ by a copy of $K_\width$, and
replacing each edge of $G$ by a copy of $K_{\width,\width}$.

\begin{lemma}
  \label{lem:Attack}
  Every graph $G$ with pathwidth $\width$ contains pairwise disjoint sets
  $B_1,\dots,B_m$ of vertices, such that: \vspace*{-2ex}
  \begin{itemize}
  \item no two vertices in distinct $B_i$ are adjacent,
  \item $G[B_i]$ has pathwidth at most $\width-1$ for each $i\in[1,m]$, and
  \item if $H$ is the graph obtained from $G$ by deleting $B_i$ and
    adding a clique on $N_G(B_i)$ for each $i\in[1,m]$, then $H$ is a
    subgraph of $P_m\cdot K_{\width+1}$.
  \end{itemize}
\end{lemma}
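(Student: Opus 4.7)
The plan is to construct the sets $B_1, \ldots, B_m$ and the super-vertex partition $V_1, \ldots, V_m$ of $H$ using a path decomposition $(B_1', B_2', \ldots, B_p')$ of $G$ of width $k$. I would choose ``checkpoint'' indices $1 \leq t_1 < t_2 < \ldots < t_m \leq p$ in the decomposition (with sentinels $t_0 := 0$ and $t_{m+1} := p+1$), and then define $B_i$ to be the vertices whose bag-interval $I_v = [l(v), r(v)]$ lies entirely in the gap $(t_i, t_{i+1})$, and $V_i$ to be the vertices whose interval contains $t_i$ as its smallest checkpoint. Each vertex falls into exactly one $B_i$ or $V_i$, giving a partition of $V(G)$.

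This construction directly gives the first two conditions. The sets $B_i$ are pairwise disjoint and pairwise non-adjacent, since vertices in different $B_i$'s have intervals in disjoint bag-ranges and therefore cannot share a bag of the path decomposition. Each $V_i$ is a subset of $B_{t_i}'$, so $|V_i| \leq k+1$. To ensure $G[B_i]$ has pathwidth at most $k-1$, I would choose the checkpoints so that within each gap $(t_i, t_{i+1})$, every bag $B_l'$ contains at least one vertex whose interval extends to $t_i$ or to $t_{i+1}$, so that this vertex lands in some $V_j$ and the restricted bags in $G[B_i]$ have size at most $k$. Moreover, any added clique $N_G(B_i)$ is automatically contained in $V_i \cup V_{i+1}$: a neighbour of some $v \in B_i$ must share a bag with $v$ in the gap $(t_i, t_{i+1})$, so its interval extends to either $t_i$ or $t_{i+1}$ and it is therefore in $V_i$ or $V_{i+1}$.

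The main obstacle is the third condition, that original $G$-edges between $V_i$ and $V_j$ in $H$ satisfy $|i - j| \leq 1$. This can fail for vertices whose intervals span multiple checkpoints. For example, if a vertex $u \in V_i$ has an interval containing $t_i, t_{i+1}, t_{i+2}$, then $u$ may be adjacent in $G$ to some $v \in V_{i+2}$, creating a non-consecutive edge in $H$. The plan is to refine the construction so that no vertex of $V(H)$ spans more than two consecutive checkpoint-regions. This can be arranged either by choosing checkpoints sufficiently far apart (so that no vertex's interval reaches a second checkpoint beyond its first), or, when this is impossible because some vertices have very long intervals, by further removing long-spanning vertices into specially designed blocks while maintaining the non-adjacency and pathwidth properties of the $B_i$'s.

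I expect the hard part to be proving that an appropriate set of checkpoints always exists and that the resulting decomposition satisfies all three conditions simultaneously. Likely this requires an iterative or recursive construction: first handle the ``short'' vertices by a suitable checkpoint choice, then deal with the long-spanning vertices by further decomposition, arguing that the pathwidth bound on each $G[B_i]$ is preserved because the restricted path decomposition loses at least one vertex per bag. Once the checkpoints and partition are fixed, verifying the three conditions reduces to the case analysis sketched above, using the interval structure of the path decomposition.
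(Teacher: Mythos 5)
Your framework is the right one, and it is essentially the paper's: pick ``checkpoint'' bags in the path decomposition, put vertices whose bag-interval sits entirely between two consecutive checkpoints into a $B_i$, and use the remaining vertices to build $H$. But you have identified a ``hard part'' that does not, in fact, exist once the checkpoints are chosen correctly, and your proposed workaround (``further removing long-spanning vertices into specially designed blocks'' via an ``iterative or recursive construction'') is where the plan goes off the rails.

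The key idea you are missing is to choose the checkpoints greedily so that the checkpoint bags are pairwise \emph{disjoint}: let $X_1$ be the first bag, and for $i\geq 2$ let $X_i$ be the \emph{first} bag in the decomposition that is disjoint from $X_{i-1}$. By the interval property of a path decomposition, any two of $X_1,\dots,X_m$ are then disjoint (a vertex in both $X_i$ and $X_j$ with $i<j$ would have to appear in every bag between them, including $X_{i+1}$, which is disjoint from $X_i$). Consequently no vertex's interval can contain more than one checkpoint, so the ``long-spanning vertex'' scenario you worry about --- a vertex in $V_i$ whose interval also reaches $t_{i+2}$ --- simply cannot occur, and you never need a second-stage decomposition. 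Moreover this choice automatically forces every bag strictly between $X_i$ and $X_{i+1}$ to intersect $X_i$ (otherwise that bag, not $X_{i+1}$, would have been chosen as the next checkpoint), which is exactly what you need for $\pw(G[B_i])\leq k-1$: restricting those bags to $B_i$ drops at least the $X_i$-vertices, so each restricted bag has size at most $k$. The third condition also falls out: $N_G(B_i)\subseteq X_i\cup X_{i+1}$ because a neighbour of a $B_i$-vertex shares one of the intermediate bags with it and hence has an interval reaching $t_i$ or $t_{i+1}$; and two vertices in $X_i$ and $X_j$ with $j\geq i+2$ have disjoint intervals (separated by $t_{i+1}$), hence are non-adjacent in $G$. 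So after the clique-fills, $H$ sits inside $P_m\cdot K_{k+1}$ with $X_i$ playing the role of the $i$-th $K_{k+1}$. In short: replace your anticipated case analysis on short vs.\ long vertices with the single greedy rule ``next checkpoint $=$ first bag disjoint from the previous one,'' and everything you wanted follows in a few lines.
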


\begin{proof}
  Consider a path decomposition $\mathcal{D}$ of $G$ with width $\width$.
  Let $X_1,\dots,X_m$ be the set of bags in $\mathcal{D}$, such that
  $X_1$ is the first bag in $\mathcal{D}$, and for each $i\geq2$, the
  bag $X_i$ is the first bag in $\mathcal{D}$ that is disjoint from
  $X_{i-1}$. Thus $X_1,\dots,X_m$ are pairwise disjoint.  For
  $i\in[1,m]$, let $B_i$ be the set of vertices that only appear in
  bags strictly between $X_i$ and $X_{i+1}$ (or strictly after $X_m$
  if $i=m$). By construction, each such bag intersects $X_i$. Hence
  $G[B_i]$ has pathwidth at most $\width-1$. Since each $X_i$ separates
  $B_{i-1}$ and $B_{i+1}$ (for $i\neq m$), no two vertices in distinct
  $B_i$ are adjacent. Moreover, the neighbourhood of $B_i$ is
  contained in $X_i\cup X_{i+1}$ (or $X_i$ if $i=m$).  Hence the graph
  $H$ (defined above) has vertex set $X_1\cup\cdots\cup X_{m}$ where
  $X_i\cup X_{i+1}$ is a clique for each $i\in[1,m-1]$.  Since
  $|X_i|\leq \width+1$, the graph $H$ is a subgraph of $P_{m}\cdot
  K_{\width+1}$.
\end{proof}

\begin{proof}[Proof of Theorem~\ref{thm:Pathwidth}] 
  We proceed by induction on $\width\geq 0$.  Every graph with pathwidth 0
  is edgeless, and is thus nonrepetitively 1-colourable, as desired.
  Now assume that $G$ is a graph with pathwidth $\width\geq 1$.  Let
  $B_1,\dots,B_m$ be the sets that satisfy Lemma~\ref{lem:Attack}. Let
  $H$ be the graph obtained from $G$ by deleting $B_i$ and adding a
  clique on $N_G(B_i)$ for each $i\in[1,m]$. Then $H$ is a subgraph of
  $P_{m+1}\cdot K_{\width+1}$, which is nonrepetitively $4(\width+1)$-colourable
  by a theorem of \citet{KP-DM08}\footnote{Say $V_1,\dots,V_t$ is a
    partition of $V(G)$ such that for all $i\in[1,t]$, we have
    $N_G(V_i)\subseteq V_{i-1}\cup V_{i+1}$ and $N_G(V_i)\cap V_{i-1}$
    is a clique.  \citet{KP-DM08} proved that $\pi(G)\leq
    4\max_i\pi(G[V_i])$. Clearly $P_m\cdot K_{\width+1}$ has such a
    partition with each $V_i$ a $(\width+1)$-clique. Thus $\pi(P_m\cdot
    K_{\width+1})\leq 4(\width+1)$.}.  By induction, $\pi(G[B_i])\leq
  2(\width-1)^2+6(\width-1)-4$.  By Lemma~\ref{lem:Blocks},
  $\pi(G)\leq\pi(H)+\max_i\pi(G[B_i]) \leq 4(\width+1)+ 2(\width-1)^2+6(\width-1)-4=
  2\width^2+6\width-4$. This completes the proof.
\end{proof}

\begin{proof}[Proof of Theorem~\ref{thm:StarPathwidth}]
  We proceed by induction on $\width\geq 0$.  Every graph with pathwidth 0 is edgeless, and is thus star 1-colourable, as desired.  Now assume that $G$ is a graph with pathwidth $\width\geq 1$.  We may assume that $G$ is connected. Let $G'$ be an interval graph containing $G$ as a spanning subgraph and with $\maxclique(G')=\width+1$. Let $I(v)$ be the interval representing each vertex $v$. Let $X$ be an inclusion-wise minimal set of vertices in $G'$ such that for every
  vertex $w$,
  \begin{equation}
    \label{eqn:Condition}
    I(w)\subseteq \bigcup\{I(v):v\in X\}\enspace.
  \end{equation}
  The set $X$ exists since $X=V(G)$ satisfies \eqref{eqn:Condition}.
  It is easily seen that $G[X]$ is an induced path, say
  $(x_1,\dots,x_n)$.  Colour $x_i$ by $i\bmod{3}$ (in $\{0,1,2\}$).
  Observe that $G'[X]$ is star 3-coloured. By \eqref{eqn:Condition},
  the subgraph $G'-X$ is an interval graph with $\maxclique(G'-X)\leq
  \width$. Thus, by induction, $G'-X$ is star-colourable with colours
  $\{3,4,\dots,3\width\}$. Suppose on the contrary that $G'$ contains a
  2-coloured path $(u,v,w,x)$. First suppose that $u$ is in $X$. Then
  $w$ is also in $X$. If $v$ is also in $X$ then so is $x$, which
  contradicts the fact that $G'[X]$ is star-coloured. So $v\not\in
  X$. Since $u$ and $w$ receive the same colour, there are at least
  two vertices $p$ and $q$ between $u$ and $w$ in the path
  $G'[X]$. Thus replacing $p$ and $q$ by $v$ gives a shorter path that
  satisfies \eqref{eqn:Condition}. This contradiction proves that
  $u\not\in X$. By symmetry $x\not\in X$.  Since $X$ and $G'-X$ are
  assigned disjoint sets of colours, $v\not\in X$ and $w\not\in
  X$. Hence $(u,v,w,x)$ is a 2-coloured path in $G'-X$, which is the
  desired contradiction. Hence $G'$ is star-coloured with $3\width+1$
  colours.
\end{proof}

\section{Open problems}

We conclude with a number of open problems:

\begin{itemize}

\item Whether there is a relationship between nonrepetitive
  choosability and pathwidth is an interesting open problem. The graphs with pathwidth 1
  (i.e., caterpillars) are nonrepetitively $\ell$-choosable for some
  constant $\ell$; see Appendix~\ref{Caterpillars}. Is every graph (or tree) with pathwidth 2
  nonrepetitively $\ell$-choosable for some constant $\ell$?

\item Except for a finite number of examples, every cycle is
  nonrepetitively $3$-colourable \citep{Currie-EJC02}.  Every cycle is
  nonrepetitively 5-choosable. (\emph{Proof.}  Precolour one vertex,
  remove this colour from every other list, and apply the
  nonrepetitive 4-choosability result for paths.) Is every cycle
  nonrepetitively $4$-choosable? Which cycles are nonrepetitively
  $3$-choosable?

\item Does every graph have a nonrepetitively $4$-choosable
  subdivision? Even $3$-choosable might be possible.

\item Is there a function $f$ such that every graph $G$ has a
  nonrepetitively $\mathcal{O}(1)$-colourable subdivision with
  $f(\pi(G))$ division vertices per edge?

\item Is there a function $f$ such that $\pi(G/M)\leq f(\pi(G))$ for
  every graph $G$ and for every matching $M$ of $G$, where $G/M$ denotes the graph
  obtained from $G$ by contracting the edges in $M$? This would
  generalise a result of \citet{NOW} about subdivisions (when each
  edge in $M$ has one endpoint of degree 2).

\item Is there a polynomial-time Monte Carlo algorithm that
  nonrepetitively $\mathcal{O}(\Delta^2)$-colours a graph with maximum
  degree $\Delta$? \citet{HSS-JACM} show that
  $\mathcal{O}(\Delta^{2+\eps})$ colours suffice for all fixed
  $\eps>0$; also see \citep{KS-STOC,CGH-SJC13} for related results. Note
  that testing whether a given colouring of a graph is nonrepetitive
  is co-NP-complete, even for 4-colourings \citep{DS09}.

\end{itemize}

\subsection*{Acknowledgement} Thanks to Pascal Ochem who pointed out an error in an earlier version of the paper, and to the 
two anonymous referees for their helpful comments.   


\small\def\soft#1{\leavevmode\setbox0=\hbox{h}\dimen7=\ht0\advance \dimen7
  by-1ex\relax\if t#1\relax\rlap{\raise.6\dimen7
  \hbox{\kern.3ex\char'47}}#1\relax\else\if T#1\relax
  \rlap{\raise.5\dimen7\hbox{\kern1.3ex\char'47}}#1\relax \else\if
  d#1\relax\rlap{\raise.5\dimen7\hbox{\kern.9ex \char'47}}#1\relax\else\if
  D#1\relax\rlap{\raise.5\dimen7 \hbox{\kern1.4ex\char'47}}#1\relax\else\if
  l#1\relax \rlap{\raise.5\dimen7\hbox{\kern.4ex\char'47}}#1\relax \else\if
  L#1\relax\rlap{\raise.5\dimen7\hbox{\kern.7ex
  \char'47}}#1\relax\else\message{accent \string\soft \space #1 not
  defined!}#1\relax\fi\fi\fi\fi\fi\fi}

\appendix

\section{Subdivisions via the Lov{\'a}sz Local Lemma}
\label{subLLL}

For the sake of comparing related proof techniques, we now give a
proof of a qualitatively similar result to
Theorem~\ref{thm:Subdivision} that uses the Lov{\'a}sz Local Lemma
instead of entropy compression. In particular, we use the following
`weighted' version of the Lov{\'a}sz Local Lemma; see \citep{MR02}.

\begin{lemma}
  \label{LLL}
  Let $\mathcal{E}=\{A_1,\dots,A_n\}$ be a set of ``bad'' events such
  that each $A_i$ is mutually independent of
  $\mathcal{E}\setminus(\mathcal{D}_i\cup\{A_i\})$ for some
  $\mathcal{D}_i\subseteq\mathcal{E}$. Let $p$ be a real number such
  that $0<p\leq\frac{1}{4}$. Let $t_1,\dots,t_n\geq1$ be integers
  called weights, such that for all $i\in[1,n]$,
  \begin{enumerate}
  \item[\textup{(a)}] $\text{\bf Pr}(A_i)\leq p^{t_i}$, and
  \item[\textup{(b)}]
    $\displaystyle2\sum_{A_j\in\mathcal{D}_i}(2p)^{t_j}\leq t_i
    \enspace.$
  \end{enumerate}
  Then with positive probability, no event in $\mathcal{E}$ occurs.
\end{lemma}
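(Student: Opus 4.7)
The natural approach is to deduce Lemma~\ref{LLL} from the general asymmetric Lov\'asz Local Lemma, which states that if $x_1,\dots,x_n\in[0,1)$ satisfy $\text{\bf Pr}(A_i) \leq x_i \prod_{A_j \in \mathcal{D}_i}(1-x_j)$ for each $i\in[1,n]$, then with positive probability no bad event occurs. The choice suggested by both hypotheses is $x_i := (2p)^{t_i}$; note that $x_i \in (0, \tfrac{1}{2}]$ since $2p \leq \tfrac{1}{2}$ and $t_i \geq 1$, so the $x_i$ lie in the allowed range.

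With this choice, hypothesis (a) gives $\text{\bf Pr}(A_i) \leq p^{t_i} = x_i \cdot 2^{-t_i}$, so verifying the LLL condition reduces to checking
$$\prod_{A_j \in \mathcal{D}_i}\bigl(1 - (2p)^{t_j}\bigr) \;\geq\; 2^{-t_i}\enspace.$$
To convert the sum appearing in hypothesis (b) into a product, I would use the elementary inequality $1 - x \geq 4^{-x}$ valid for $x \in [0, \tfrac{1}{2}]$ (the two sides agree at $x=0$ and $x=\tfrac{1}{2}$, and $4^{-x}$ is convex while $1-x$ is affine, so the line is above the exponential on this interval). Since $(2p)^{t_j} \leq 2p \leq \tfrac{1}{2}$, this yields
$$\prod_{A_j \in \mathcal{D}_i}\bigl(1 - (2p)^{t_j}\bigr) \;\geq\; 4^{-\sum_{A_j\in\mathcal{D}_i}(2p)^{t_j}} \;\geq\; 4^{-t_i/2} \;=\; 2^{-t_i}\enspace,$$
where the last inequality is exactly hypothesis (b).

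I do not expect any real obstacle. The only subtle point is choosing the right base in the lower bound for $1-x$: using $1-x \geq e^{-2x}$ would yield only $e^{-t_i}$ on the left of the displayed inequality, which is strictly weaker than the required $2^{-t_i}$. The base $4$ is what makes the constants align with the factor $2$ in hypothesis (b) and the bound $p \leq \tfrac{1}{4}$; these three ingredients are exactly balanced, which is why Lemma~\ref{LLL} takes the particular numerical form stated.
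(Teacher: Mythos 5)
The paper does not prove Lemma~\ref{LLL}; it is stated as a known ``weighted'' form of the Lov\'asz Local Lemma and cited from Molloy and Reed \citep{MR02}, so there is no in-paper proof to compare against. Your derivation from the asymmetric (general) LLL is correct and self-contained. The choice $x_i := (2p)^{t_i}$ gives $x_i \in (0, \tfrac{1}{2}]$, the rewrite $p^{t_i} = x_i \cdot 2^{-t_i}$ is exact, and the inequality $1-x \geq 4^{-x}$ on $[0,\tfrac{1}{2}]$ is valid (equality at both endpoints, convex versus affine, so the secant dominates), which combined with hypothesis (b) gives $\prod_{A_j\in\mathcal{D}_i}(1-x_j) \geq 4^{-t_i/2} = 2^{-t_i}$ as needed. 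One small point worth making explicit: the set $\mathcal{D}_i$ in the lemma could in principle contain $A_i$ itself, but this only makes the product smaller, so verifying the asymmetric-LLL condition with the product taken over all of $\mathcal{D}_i$ is a (possibly stronger) sufficient condition, and the conclusion still follows. Your observation about why the base $4$ (rather than $e^{2}$) is the right normalisation, matching the factor $2$ in (b) and the bound $p\leq \tfrac{1}{4}$, is exactly the reason this weighted statement has its particular constants.
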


\begin{theorem}
  \label{OldSubdivision}
  For every graph $H$ with maximum degree $\Delta$, every
  subdivision $G$ of $H$ with at least $3+400\log\Delta$ division
  vertices per edge is nonrepetitively $23$-choosable.
\end{theorem}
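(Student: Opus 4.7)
The plan is to apply the weighted Lovász Local Lemma (Lemma~\ref{LLL}) to a random list colouring of $G$. Colour each vertex of $G$ independently and uniformly at random from its list of $23$ colours. For each even path $P$ of $G$ on $2k$ vertices, let $A_P$ be the bad event that $P$ is repetitively coloured; then $\mathbf{Pr}(A_P) \le (1/23)^k$ because the $k$ matching constraints $\psi(v_i)=\psi(v_{k+i})$ involve pairwise disjoint pairs of random variables and each is satisfied with probability at most $1/23$. Two events $A_P$ and $A_{P'}$ are mutually independent whenever $V(P) \cap V(P') = \varnothing$, so $\mathcal{D}_P = \{A_{P'}:V(P)\cap V(P')\ne\varnothing,\ P'\ne P\}$.

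The crucial step is the choice of weights. Writing $m(P)$ for the number of vertices of $V(H)\subseteq V(G)$ that lie on $P$ (the ``original'' vertices, as opposed to subdivision vertices), I would take $p:=1/4$ and set
$$t_P \;:=\; c\,k \;+\; \lambda\, m(P),$$
where $c$ is a constant slightly below $\log 23/\log 4 \approx 2.26$ and $\lambda = \Theta(\log \Delta)$ is large enough that $\Delta\cdot (1/2)^{\lambda}<1$. Intuitively, the term $ck$ is there to absorb the polynomial growth in the number of subpaths of a given length through a fixed vertex, while $\lambda m(P)$ is there to absorb the $\Delta^{m'}$ blow-up coming from branching choices at original vertices.

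For condition (a), taking logarithms reduces the required inequality to $c + \lambda\cdot m(P)/k \le \log 23/\log 4$. The hypothesis that every edge of $H$ is subdivided at least $d:=3+400\log \Delta$ times forces $2k\ge m(P)+(m(P)-1)d$ whenever $m(P)\ge 2$, so $m(P)/k \le 2/(d+1)+O(1/k)$; with $d=\Omega(\log \Delta)$ and $\lambda=O(\log \Delta)$, the term $\lambda\,m(P)/k$ contributes only $O(1/400)$ to the left-hand side. The residual small-path cases with $m(P)\le 1$ would otherwise violate (a); these are dealt with by first arbitrarily precolouring $V(H)$ from the lists (which is possible because $V(H)$ is independent in $G$) and removing each precoloured colour from the lists of its division neighbours, leaving effective lists of size at least $22$. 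This makes every $m=k=1$ bad event impossible and suppresses the probability of the remaining small-$k$, $m\ge 1$ events by an additional factor of $1/22$ for every matching pair that contains a precoloured endpoint, which suffices to meet (a) in these corner cases too.

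For condition (b), I would bound
$$2\sum_{A_{P'}\in \mathcal{D}_P}(2p)^{t_{P'}} \;\le\; 2\sum_{u\in V(P)}S_u, \qquad S_u:=\sum_{P'\ni u}(2p)^{t_{P'}},$$
and split $S_u$ according to $m':=m(P')$. When $m'=0$, the path $P'$ lies inside a single subdivision path $P_{vw}$ and at most $2k'$ length-$2k'$ subpaths of it pass through $u$; summing $\sum 2k'(1/2)^{ck'}$ is a convergent geometric series. When $m'\ge 1$, the number of length-$2k'$ paths through $u$ with $m'$ original vertices is at most $O(k'\Delta^{m'})$, because the only branching occurs at the $m'$ original vertices; pairing $\Delta^{m'}$ against $(1/2)^{\lambda m'}$ turns the double sum over $k'$ and $m'$ into a convergent geometric series as soon as $\lambda>\log_2\Delta$. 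Therefore $S_u=O(1)$, whence $2\sum_{u\in V(P)}S_u=O(k)$, which is absorbed by $t_P\ge ck$. The main obstacle is precisely the tension between (a) and (b): (a) limits how large $\lambda m/k$ can be, while (b) demands $\lambda\gtrsim \log_2\Delta$ to beat the branching. The hypothesis $d\ge 3+400\log \Delta$ is calibrated exactly so that $\lambda=\Theta(\log \Delta)$ fits inside both constraints with room to spare, and this slack permits $c$ to sit close enough to $\log 23/\log 4$ to accommodate the $23$ colours of the statement.
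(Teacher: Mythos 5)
Your weight function $t_P = ck + \lambda m(P)$ with $p=1/4$ is a genuinely different bookkeeping device from the paper's (the paper sets $p = 1/21$ and weights by the number of division--division mirror pairs $\ell_i - k_i$), and the broad outline is plausible. But there is a real gap in the treatment of paths with exactly one original vertex, i.e.\ $m(P)=1$ and $k\ge 2$. For such a path, condition (a) with your weights demands $\Pr(A_P)\le (1/4)^{ck+\lambda}$, and since $\lambda=\Theta(\log\Delta)$ while $k$ can be a fixed small constant (say $k=2$), the right-hand side is of order $\Delta^{-\Theta(1)}$. The actual probability, however, is roughly $(1/22)^k$, which is a constant independent of $\Delta$, so condition (a) fails badly for large $\Delta$. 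Your proposed fix only removes a precoloured colour from the \emph{adjacent} division vertices, which kills the $k=1$ case but leaves $k\ge 2$ untouched: the mirror $x$ of the unique original vertex $v$ in a path with $m(P)=1$ is at distance $k\ge 2$ from $v$, so $x$ is not a neighbour of $v$ and its list may still contain $\psi(v)$.

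The paper avoids this by a stronger list-pruning step and a structural observation you are missing: it deletes the colours of \emph{both} endpoints $v,w$ from the lists of \emph{every} division vertex on the subdivision path $P_{vw}$ (so division vertices have $\ge 21$, not $\ge 22$, usable colours). With that operation, if $P$ has exactly one original vertex $v$, then since $P - v$ lies entirely on subdivision paths of edges incident to $v$, the mirror $x$ of $v$ is itself a division vertex of an edge incident to $v$; hence $\psi(v)$ has been deleted from $L(x)$ and the bad event has probability $0$. This is why the paper can cleanly restrict attention to ``short'' paths ($m=0$) and ``long'' paths ($m\ge 2$), whereas your setup silently assumes all $m\ge 1$ events are small. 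If you adopt the full edge-pruning and then discard $m=1$ paths, your scheme should be salvageable (modulo the cosmetic issue that Lemma~\ref{LLL} as stated requires integer weights, so $t_P=ck+\lambda m(P)$ needs rounding), and would then be a legitimate alternative weighting to the paper's $t_i=\ell_i-k_i$. As written, though, the $m(P)=1$, $k\ge 2$ case is an unrepaired hole.
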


\begin{proof}
  We may assume that $\Delta\geq 2$.  Let
  $r:=3+\ceil{400\log\Delta}$.  Let $G$ be a subdivision of $H$
  with at least $r$ division vertices per edge. Arbitrarily colour
  each original vertex of $H$ from its list. For each edge $vw$ of
  $H$, delete the colours chosen for $v$ and $w$ from the list of each
  division vertex on the edge $vw$. Now each division vertex has a
  list of at least $21$ colours. Colour each division vertex
  independently and randomly from its list. Let $p:=\frac{1}{21}$.

  Suppose that some path $P$ containing exactly one original vertex
  $v$ is repetitively coloured. Let $x$ be the vertex corresponding to
  $v$ in the other half of $P$. Thus $x$ is a division vertex of some
  edge incident to $v$, which is a contradiction since the colour of
  $v$ was removed from the list of colours at $x$. Thus no path with
  exactly one original vertex is repetitively coloured. Say an even
  path with no original vertices is \DEF{short}, and an even path
  with at least two original vertices is \DEF{long}. To prove that a
  colouring of $G$ is nonrepetitive it suffices to prove that no long
  or short path is repetitively coloured.

  Let $\mathcal{P}$ be the set of all short or long paths in $G$. Say
  $\mathcal{P}=\{P_1,\dots,P_n\}$ and each $P_i$ has $2\ell_i$
  vertices, of which $k_i$ are original vertices. Note that
  \begin{equation}
    \label{Long}
    2\ell_i\geq (k_i-1)(r+1)+1=(r+1)k_i-r\enspace.
  \end{equation}
  Orient each path $P_i$ so that the $j$-th vertex in $P_i$ is well
  defined. (Edges may be oriented differently in different paths.)\
  Let $A_i$ be the event that $P_i$ is repetitively coloured. Let
  $\mathcal{E}:=\{A_1,\dots,A_n\}$. If $P_i=(v_1,\dots,v_{2\ell_i})$
  then $v_j$ and $v_{\ell_i+j}$ are both division vertices for at
  least $\ell_i-k_i$ values of $j\in[1,\ell_i]$. Let $t_i:=\ell_i-k_i$
  be the weight of $A_i$ and of $P_i$.  Hence
  $\text{Pr}(A_i)\leq(\frac{1}{21})^{t_i}$, and Lemma~\ref{LLL}(a) is
  satisfied.

  We claim that $100 t_i\geq 99\ell_i$ for each path $P_i$.  This is
  immediately true if $k_i=0$. Now assume that $k_i\geq 2$. By
  \eqref{Long}, we have $2\ell_i \geq (r+1)(k_i-1) \geq 400(k_i-1)$
  and $\ell_i\geq 200$.  Thus $400 k_i\leq 2\ell_i+400$ and $400
  t_i=400\ell_i-400k_i\geq 398\ell_i-400\geq 396\ell_i$, as claimed.

  For each $i\in[1,n]$, let $\mathcal{D}_i$ be the set of events $A_j$
  such that the corresponding path $P_j$ and $P_i$ have a division
  vertex in common. Since division vertices are coloured
  independently, $A_i$ is mutually independent of
  $\mathcal{E}\setminus(\mathcal{D}_i\cup\{A_i\})$.

  Let $P_i\in\mathcal{P}$. Our goal is to bound the number of paths in
  $\mathcal{P}$ of given weight $t$ that intersect $P_i$.

  First consider the case of short paths of weight $t$. Such paths
  have order $2t$. Each vertex is in at most $2t$ short paths of order
  $2t$. Thus each vertex is in at most $2t$ short paths of weight
  $t$. Thus $P_i$, which has order $2\ell_i$, intersects at most
  $2\ell_i\cdot 2t\leq \frac{400}{99}t\,t_i$ short paths of weight
  $t$.

  Now consider the case of long paths with weight $t$. Let $P_j$ be
  such a long path. By \eqref{Long}, we have $(r+1)k_j\leq
  2\ell_j+r\leq 4\ell_j=4t+4k_j$.  Thus $k_j\leq \frac{4t}{r-3}$.
  Thus for each $q$, each vertex is the $q$-th vertex in at most
  $\Delta^{4t/(r-3)}$ long paths of weight $t$. Since $r-3\geq 400\log
  \Delta$, each vertex is the $q$-th vertex in at most $2^{t/100}$
  long paths of weight $t$.  Now $|P_i|=2\ell_i\leq \frac{200}{99}
  t_i$. Similarly, each path of weight $t$ has at most
  $\frac{200}{99}t$ vertices. Hence $P_i$ intersects at most
  $(\frac{200}{99})^2t_i t\, 2^{t/100}$ long paths of weight $t$. The
  same bound holds for short paths of weight $t$ (since
  $\frac{400}{99}t\,t_i\leq (\frac{200}{99})^2t_i t\, 2^{t/100}$).

  Thus Lemma~\ref{LLL}(b) is satisfied if for all $i$,
$$2\sum_{t\geq1} (\tfrac{200}{99})^2 t_i t \,2^{t/100}(\tfrac{2}{21})^{t}\leq t_i\enspace;$$
that is,
$$2 (\tfrac{200}{99})^2
\sum_{t\geq1} t \,(\tfrac{2}{21} \,2^{1/100})^{t}\leq 1\enspace.$$ For
$0<c<1$, we have $\sum_{t\geq 1}tc^t=\frac{c}{(1-c)^2}$.  Let
$c:=\tfrac{2}{21}\, 2^{1/100}\approx0.0959$.  Thus
$$2 (\tfrac{200}{99})^2 \sum_{t\geq1}  t
\,c^t=2 (\tfrac{200}{99})^2 \tfrac{c}{(1-c)^2}<0.958\enspace,$$ as
desired. Hence Lemma~\ref{LLL}(b) is satisfied.

Therefore with positive probability, no event in $\mathcal{E}$ occurs.
Thus, there exists a choice of colours for the division vertices such
that no event in $\mathcal{E}$ occurs.  That is, no short or long path
is repetitively coloured. Hence $G$ is nonrepetitively colourable from
the given lists.  Therefore $G$ is nonrepetitively $23$-choosable.
\end{proof}

\section{Caterpillars}
\label{Caterpillars}

Here we prove that every caterpillar is nonrepetitively
$148$-choosable. (Note that the constant $148$ can be significantly improved using
entropy-compression.)\ For $p\geq 2$, a coloured path $(v_1,\dots,v_{2p})$ of even order at least $4$ is 
\DEF{almost repetitively coloured} if $\psi(v_i)=\psi(v_{p+i+1})$ for all $i\in[1,p-1]$.
For $p\geq 1$, a coloured path $(v_1,\dots,v_{2p+1})$ of odd order at least $3$ is 
\DEF{almost repetitively coloured} if $\psi(v_i)=\psi(v_{p+i+1})$ for all $i\in[1,p]$. 

\begin{lemma}
  \label{Almost}
  Every path $G$ is $148$-choosable such that no path is repetitively
  coloured and no path is almost repetitively coloured.
\end{lemma}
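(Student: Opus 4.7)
The plan is to adapt the entropy-compression algorithm of Grytczuk, Kozik and Micek (as used in the proof of Theorem~\ref{thm:first_theorem_made_precise}) to the stronger setting where both repetitive and almost repetitive paths must be avoided. Process the vertices of $G$ in order from left to right. At each step $i$, colour the current (leftmost) uncoloured vertex with the colour of index $c_i$ in its list; then check whether the colouring now contains a repetitively coloured path or an almost repetitively coloured path ending at this vertex. If so, uncolour the corresponding suffix (of length $k$ when an even repetitive path of order $2k$ is detected, and of length $p$ or $p+1$ when an even/odd almost repetitive path is detected, respectively), and log a record entry carrying the \emph{type} of violation (repetitive, even almost, or odd almost) together with the length parameter. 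Because $G$ is a path, the length parameter together with the frontier position uniquely identifies the bad subpath, so the encoding is lossless; fix an arbitrary tie-breaking rule when several bad patterns coexist.

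Next I would establish the analogues of Lemmas~\ref{lem:P}--\ref{lem:lossless}: the record $R$ together with the (partial) colouring $\phi$ at termination lets one reconstruct the input vector $(c_1,\dots,c_t)$ uniquely, giving $|\mathcal{F}_t| = |\mathcal{A}_t| \leq (\ell+1)^n |\mathcal{R}_t|$, where $\ell = 148$. Bounding $|\mathcal{R}_t|$ proceeds by the generating-function approach of Claim~\ref{TheClaim}: encode each step as either a ``success'' (one new coloured vertex) or a ``failure'' of one of three kinds, each annotated with a positive length parameter. The resulting weighted Dyck-like language satisfies a functional equation of the shape $F(z) = z + 3\,z\, F(z)/(1-F(z))^2$ (or a mild variant accounting for the one-vertex offset in the almost-repetitive case), to which Theorem~\ref{thm:smooth inverse asymptotics} applies. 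The smallest positive root of $\phi(u) - u\phi'(u)=0$ gives a growth rate $C$ for $|\mathcal{R}_t|$; a short numerical check shows $C < 148$, so $|\mathcal{A}_t| = o(\ell^t)$ and the algorithm must succeed on some input.

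The main obstacle is the enumeration step: having three distinct kinds of bad paths (repetitive, even almost, odd almost) triples the branching in the functional equation compared to Claim~\ref{TheClaim}, and the almost-repetitive events do not uncolour a clean ``half'', so one must be careful to certify that the suffix actually uncoloured suffices to invalidate the offending pattern while still permitting lossless recovery. Verifying the lossless property in the presence of overlapping bad paths, and then solving the resulting characteristic equation to confirm that $148$ colours exceed the growth rate $C$, are the two computations where care is needed; once these are in place, the conclusion follows from the standard entropy-compression counting argument.
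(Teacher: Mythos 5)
Your proposal takes a genuinely different route from the paper's. The paper proves this lemma directly via the weighted Lov\'asz Local Lemma (Lemma~\ref{LLL}): colour each vertex independently and uniformly from its list, define one bad event per repetitive or almost-repetitive subpath, assign each event a weight of roughly half its order (with a small correction for the almost-repetitive variants), and verify conditions (a)--(b) with $p=1/148$. This is short and self-contained, and $148$ is chosen simply so that the final numerical inequality holds comfortably. Your entropy-compression plan is sound in spirit and would, as the paper itself remarks in passing, yield a noticeably smaller constant; it also reuses the machinery of Sections~2--3 rather than importing the LLL.

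One detail in your sketch needs repair, and you partly flag it yourself. In an even almost-repetitive path $(v_1,\dots,v_{2p})$ the constraint is $\psi(v_i)=\psi(v_{p+i+1})$ only for $i\in[1,p-1]$, so $v_p$ and $v_{p+1}$ carry no colour constraint; similarly in the odd case $(v_1,\dots,v_{2p+1})$ the middle vertex $v_{p+1}$ is unconstrained. Your proposed suffix lengths $p$ (even) and $p+1$ (odd) would uncolour $v_{p+1}$, whose colour is then unrecoverable from the surviving first half, breaking the lossless-encoding step that plays the role of Lemma~\ref{lem:lossless}. The fix is to uncolour only the constrained mirror block, namely $v_{p+2},\dots,v_{2p}$ (length $p-1$) in the even case and $v_{p+2},\dots,v_{2p+1}$ (length $p$) in the odd case; this still destroys the offending pattern and keeps recovery lossless, since each uncoloured colour equals $\phi(v_i)$ for a still-coloured $v_i$. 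With that adjustment and the observation that on a path at most three bad-event types correspond to any given uncoloring length, the generating-function bound from Claim~\ref{TheClaim} specialises cleanly, and any reasonable estimate of the resulting growth rate will easily beat $148$.
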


\begin{proof}
  Colour each vertex $v$ of $G$ independently and randomly by a colour
  in the list of $v$. Let $P$ be a subpath of $G$ of order at least 2.
  If $|P|$ is even, then let $A_P$ be the event that $P$ is repetitively coloured; say $A_P$ has weight  $w(A_P):=\half |P|$. 
  If $|P|\geq 3$, then let $B_P$ be the event that $P$ is almost repetitively coloured. 
  If $|P|$ is even, then say $B_P$ has weight  $w(B_P):=\half |P|-2$.  
  If $|P|$ is odd, then say $B_P$ has weight  $w(B_P):=\half(|P|-1)$.  
    A vertex $v$ is said to be \emph{in} an event $E$ and $E$ \emph{contains} $v$ if $v$ is in the path  corresponding to $E$. 
      Let $\mathcal{E}$ be the set of all events. 
      
Observe that if an event $E\in\mathcal{E}$ has weight $t$ then   $\textbf{Pr}(E)\leq 148^{-t}$. 
Hence  Lemma~\ref{LLL}(a) is satisfied with $p:=148^{-1}$.  
For each event $E$ corresponding to some some subpath $P$, let  $\mathcal{D}(E)$  be the set of events that  contain a vertex in $P$.
  Thus  $E$ is mutually independent of  $\mathcal{E}\setminus\mathcal{D}(E)$.   Each vertex  is in at most $\ell$ subpaths of order $\ell$. 
  Each event of weight $t$ corresponds to a path of order $2t$ or $2t+1$ or $2t+2$.  
  So each vertex is in at most $6t+3$ events of weight $t$.  
Thus, if an event $E\in\mathcal{E}$ has weight $s$, then the path corresponding to $E$ has at most $2s+2$ vertices, implying $\mathcal{D}(E)$ 
contains  at most $(2s+2)(6t+3)\leq   36 s t$ events of weight $t$ (since $s,t\geq 1$).  
  
    To apply Lemma~\ref{LLL} we need
$$2\sum_{t\geq1}36st\,74^{-t}\leq s\enspace.$$
That is,
$$72\sum_{t\geq1}t \,74^{-t}\leq 1\enspace.$$
Now $\sum_{t\geq 1}tc^{-t}=\frac{c}{(c-1)^2}$ for all $c>1$.  Thus
$$72\sum_{t\geq1}t\,74^{-t}=\frac{72\cdot
  74}{73^2}<1\enspace,$$ as desired. Hence Lemma~\ref{LLL}(b) is
satisfied. Therefore with positive probability, no event in $\mathcal{E}$ occurs.
Thus, there exists a choice of colours such that no event in $\mathcal{E}$ occurs, in
which case no path is repetitively coloured and no path is almost
repetitively coloured.
\end{proof}


\begin{theorem}
  Every caterpillar is nonrepetitively $148$-choosable.
\end{theorem}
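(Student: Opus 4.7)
Let $T$ be a caterpillar with list assignment $L$ such that $|L(v)|\geq 148$ for every $v\in V(T)$. Write $T$ as a spine path $S$ together with a set of \emph{legs}: leaves of $T$, each adjacent to some vertex of $S$. The plan is to colour $S$ first using Lemma~\ref{Almost}, and then to extend the colouring greedily to the legs so as to avoid a short list of forbidden colours inherited from nearby spine vertices.

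First I would apply Lemma~\ref{Almost} to the spine $S$ with the restricted list assignment. This produces a colouring $\psi$ of $V(S)$ from the given lists such that no subpath of $S$ is either repetitively or almost repetitively coloured. Next, for each leg $\ell$ attached to a spine vertex $v$ with (at most two) spine neighbours $w_1,w_2$, I would pick $\psi(\ell)\in L(\ell)\setminus\{\psi(v),\psi(w_1),\psi(w_2)\}$. Since at most three colours are excluded and $|L(\ell)|\geq 148$, such a choice always exists.

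To verify that the resulting $\psi$ is nonrepetitive, suppose for contradiction that some path $P$ of even order $2k$ in $T$ is repetitively coloured. Since legs are leaves, $P$ contains at most two legs, and each is an endpoint of $P$. The all-spine case is ruled out immediately by Lemma~\ref{Almost}. If $P$ has exactly one leg $\ell$, attached to spine vertex $v$, then either $k=1$ (which forces $\psi(\ell)=\psi(v)$, contradicting the greedy choice) or $k\geq 2$ (in which case a direct index comparison shows that the spine portion of $P$, an odd-order subpath of length $2k-1\geq 3$, is almost repetitively coloured, contradicting Lemma~\ref{Almost}). If $P$ has a leg at each endpoint, say $\ell_1$ attached to $u_1$ and $\ell_2$ attached to $u_m$ with $m=2k-2$, then $k\geq 2$; the subcase $k=2$ forces $\psi(\ell_1)=\psi(u_2)$ for the two adjacent spine vertices $u_1,u_2$, again contradicting the greedy choice, while for $k\geq 3$ the internal spine portion $(u_1,\dots,u_m)$, of even order $2(k-1)\geq 4$, is almost repetitively coloured, again contradicting Lemma~\ref{Almost}.

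The main obstacle is identifying exactly which constraints the legs must satisfy. The only bad configurations that cannot be absorbed into Lemma~\ref{Almost} applied to the spine are the two short cases $|P|=2$ with a single leg and $|P|=4$ with a leg at each end. Together they dictate the three-element exclusion list used above: $\psi(v)$ alone would not suffice, since it would leave the $|P|=4$ configuration uncontrolled, whereas additionally excluding the colours of the spine neighbours of $v$ kills that case through the inequality $\psi(\ell_1)\neq \psi(u_2)$.
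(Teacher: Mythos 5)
Your proposal is correct and takes essentially the same approach as the paper: apply Lemma~\ref{Almost} to the spine, colour each leaf avoiding the colours of its spine neighbour and that neighbour's (at most two) spine neighbours, then argue that a repetitive path would force the spine to contain a repetitive or almost-repetitive subpath. Your explicit handling of the short cases $|P|=2$ and $|P|=4$ makes visible the reason for the three-colour exclusion set, which the paper's proof leaves implicit in its claim that $|Q|\geq 6$.
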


\begin{proof}
  Let $P$ be the spine of a caterpillar $T$.  By Lemma~\ref{Almost}, $P$ is $148$-choosable such that no
  subpath is repetitively coloured and no subpath is almost
  repetitively coloured.  Colour each leaf $x$ of $T$ by by an arbitrary
  colour in the list of $x$ that is distinct from the colour assigned
  to the $w$ neighbour of $v$ (which is in $P$) and is distinct from the colours assigned to the two neighbours of $w$ in $P$. 
Suppose, on the contrary, that there is a repetitively coloured path $Q$ in $T$.  
Since $P$ is nonrepetitively coloured,  $Q$ has at least one endpoint  that is a leaf in $T$. 
If exactly one endpoint of $Q$ is a leaf, then $Q\cap P$ is an almost repetitively coloured path of odd order, which is a contradiction.
Otherwise both endpoints of $Q$ are leaves. By the choice of colours for the leaves of $T$, we have $|Q|\geq 6$. 
Thus $Q\cap P$ is an almost repetitively coloured path of even order at least $4$, which is a contradiction.
This contradiction proves that there is no repetitively coloured path in $T$.  Therefore
$T$ is $148$-choosable.
 \end{proof}

\end{document}